\documentclass[reqno,final,11pt]{amsart}

\usepackage[utf8]{inputenc}
\usepackage[T1]{fontenc}
\usepackage{amsmath,amssymb,amsfonts,amsthm}
\usepackage{geometry}
\usepackage{enumitem}
\usepackage{xcolor}
\usepackage{hyperref}
\usepackage{natbib}
\usepackage{lineno}
\usepackage{url}
\definecolor{linkcolor}{rgb}{0.16,0.42,0.70}
\hypersetup{colorlinks=true,linkcolor=linkcolor,citecolor=linkcolor,urlcolor=linkcolor}

\newcommand{\R}{\mathbb R}
\newcommand{\N}{\mathbb N}
\newcommand{\E}{\mathbb E}
\newcommand{\Pbb}{\mathbb P}
\newcommand{\Kk}{\mathbf K}
\newcommand{\one}{\mathbf 1}
\newcommand{\ind}[1]{\mathbf 1_{\{#1\}}}
\newcommand{\ip}[2]{\langle #1,#2\rangle}
\newcommand{\cM}{\mathcal M}

\theoremstyle{plain}
\newtheorem{theorem}{Theorem}[section]

\newtheorem{lemma}[theorem]{Lemma}

\theoremstyle{definition}

\newtheorem{example}[theorem]{Example}
\theoremstyle{remark}
\newtheorem{remark}[theorem]{Remark}

\numberwithin{equation}{section}

\title[Mixed Finite–Infinite Mean Multitype Branching Systems]{Convergence of a Critical Multitype Branching Particle System with Mixed Finite and Infinite Mean Lifetimes}
\author[J.H. Ram\'{i}rez-Gonz\'{a}lez]{ J.H. Ram\'{i}rez-Gonz\'{a}lez}
\address{Universidade de S\~ao Paulo, Brazil}
\email{hermenegildo.ramirez@usp.br}

\author{Fabio Prates Machado}

\email{fmachado@ime.usp.br}

\renewcommand{\subjclassname}{2020 Mathematics Subject Classification}

\subjclass[2020]{Primary 60J80; Secondary 60K05, 60G52.}

\keywords{Branching particle systems, infinite-mean lifetimes, renewal theorem, stable motions}

\begin{document}
% \linenumbers
\begin{abstract}
We study a critical multitype branching particle system in
\(\R^N\) with finite type space \(\Kk=\{1,\dots,K\}\).
Particles of type \(i\) move according to a symmetric
\(\alpha_i\)-stable process and reproduce according to a critical
offspring law with irreducible stochastic mean matrix.
All lifetime distributions are non-arithmetic. The type-\(1\) lifetime distribution has infinite
mean and a regularly varying tail
\[
  1-F_1(t)\sim c_1t^{-\gamma},
  \qquad 0<\gamma<1,
\]
whereas the lifetime distributions of types \(2,\dots,K\) satisfy the
polynomial upper-tail bounds
\[
  1-F_i(t)\le C t^{-\eta_i},
  \qquad i=2,\dots,K,
  \qquad \eta_i>1,
  \qquad
  \eta:=\min_{2\le i\le K}\eta_i.
\]
The offspring mechanism satisfies a \((1+\beta)\)-stable-domain
regular-variation condition, with \(\beta\in(0,1]\).
The initial population is a Poisson random measure with intensity
\[
  \Lambda=\sum_{i=1}^K a_i\,\lambda\otimes\delta_i,
  \qquad a_i\ge0,
\]
and we set \(A:=\sum_{i=1}^K a_i\).
Under the space--lifetime condition
\[
  \rho:=\left(\eta-1\right)\wedge\frac{N}{\alpha_1}
  >
  \frac{\gamma}{\beta},
\]
we prove that the system converges to a Poisson random measure with
intensity \(A\lambda\otimes\delta_1\).
Thus, although the initial population may assign positive spatial
intensity to every type, the limiting population is supported entirely
on the infinite-mean type while preserving the aggregate initial
spatial intensity \(A\lambda\).
\end{abstract}
\maketitle

\section{Introduction and previous work}

This paper studies the limiting behaviour of a critical branching particle system viewed as a measure-valued process. To formulate the relevant limiting regimes precisely, we use the following notation and terminology. Let \(E\) be a locally compact state space, and let \((X_t)_{t\ge0}\) be an \(\cM(E)\)-valued particle system, where \(\cM(E)\) denotes the space of nonnegative Radon measures on \(E\), endowed with the vague topology. We write \(C_c^+(E)\) for the class of nonnegative continuous functions on \(E\) with compact support. For \(\mu\in\cM(E)\) and a measurable function \(\varphi:E\to\R\) that is integrable with respect to \(\mu\), we write
\[
  \ip{\mu}{\varphi}:=\int_E\varphi\,d\mu.
\]
We say that the system survives in the limiting sense if
\[
  X_t\Rightarrow X_\infty,\qquad t\to\infty,
\]
for some random measure \(X_\infty\in\cM(E)\) satisfying
\[
  \Pbb\{X_\infty\ne0\}>0.
\]
We say that the system undergoes local extinction if
\[
  \ip{X_t}{\varphi}\xrightarrow[t\to\infty]{P}0,
  \qquad \varphi\in C_c^+(E).
\]
When the limiting random measure preserves the initial mean intensity, we say that the system persists with full intensity.

For monotype critical branching particle systems in \(\R^N\), a basic reference is the Markovian model studied in \cite{GW}. In that model, the initial population is a homogeneous Poisson random measure on \(\R^N\) with intensity proportional to Lebesgue measure. Each particle moves according to a symmetric \(\alpha\)-stable process, \(0<\alpha\le2\), has an exponentially distributed lifetime, and, when it dies, produces a random number of offspring whose probability generating function is
\[
  H(s)=s+\frac12(1-s)^{1+\beta},
  \qquad s\in[0,1],\quad 0<\beta\le1.
\]
All offspring particles start from the position at which their parent died and evolve according to the same rules. For each particle, its spatial motion, lifetime, and offspring variable are mutually independent, and the corresponding random elements are independent across particles. For this system, \cite{GW} proved local extinction when \(N\le\alpha/\beta\) and convergence to a nonzero equilibrium of full intensity when \(N>\alpha/\beta\).

The case of general non-arithmetic lifetime distributions was treated in \cite{VW}. In the finite-mean case, the same critical dimension \(N=\alpha/\beta\) is obtained: local extinction occurs when \(N\le\alpha/\beta\), whereas convergence to a nontrivial limit of full intensity holds when \(N>\alpha/\beta\). If the lifetime tail is regularly varying with index \(-\gamma\), where \(0<\gamma\le1\), the critical dimension becomes \(N=\alpha\gamma/\beta\). More precisely, local extinction occurs when \(N<\alpha\gamma/\beta\), convergence to a Poisson random measure of full intensity holds when \(N>\alpha\gamma/\beta\), and, when \(N=\alpha\gamma/\beta\), the system converges to a nontrivial limit along a subsequence. Thus, in the monotype setting, the lifetime-tail exponent \(\gamma\) changes the critical dimension.

We next turn to the multitype setting. Consider a branching particle system in \(\R^N\) with finite type space \(\Kk:=\{1,\dots,K\}\). A particle of type \(i\), born at \(x\in\R^N\), moves from \(x\) according to a symmetric \(\alpha_i\)-stable process, with \(0<\alpha_i\le2\), and has lifetime distribution \(F_i\). At the end of its lifetime, the particle dies and produces \(\zeta_{i,j}\) offspring particles of type \(j\), \(j=1,\dots,K\), where
\[
  \zeta_i:=(\zeta_{i,1},\dots,\zeta_{i,K})
\]
denotes a generic offspring vector for a type-\(i\) particle. All offspring particles start from the position at which their parent died. The multitype offspring probability generating function is
\[
  f_i(\mathbf s):=\E\left(\prod_{j=1}^K s_j^{\zeta_{i,j}}\right),
  \qquad
  \mathbf s=(s_1,\dots,s_K)\in[0,1]^K.
\]
Particles of the same type obey the same probabilistic laws. For each particle, its spatial motion, lifetime, and offspring vector are mutually independent, and the corresponding random elements are independent across particles. Write \(\mathbf f:=(f_1,\dots,f_K)\) and \(\one:=(1,\dots,1)\). The mean offspring matrix is
\[
  M:=(m_{i,j})_{i,j=1}^K,
  \qquad
  m_{i,j}:=\frac{\partial f_i}{\partial s_j}(\one)
  =\E(\zeta_{i,j}).
\]
Throughout the paper, \(M\) is assumed to be irreducible and stochastic. Consequently, its Perron--Frobenius eigenvalue is \(1\), and the system is critical. We set \(u:=\one\) and let \(v\) denote the stationary probability vector of \(M\); then
\[
  Mu=u,\qquad vM=v,\qquad \ip{v}{u}=1.
\]

The extinction side for multitype Bellman--Harris systems was studied in \cite{Kevei}. The system considered there starts at time \(0\) from a Poisson random measure on \(\R^N\times\Kk\) with prescribed homogeneous intensity
\[
  \Lambda_0:=\sum_{i=1}^K b_i\,\lambda\otimes\delta_i,
  \qquad b_i\ge0,
\]
where \(\lambda\) denotes Lebesgue measure on \(\R^N\). All particles present at time \(0\) have age \(0\), and the initial population is independent of all particle motions, lifetimes, and offspring variables. The offspring mechanism is assumed to satisfy
\[
  x-\ip{v}{\one-\mathbf f(\one-ux)}
  \sim x^{1+\beta}L(x),
  \qquad x\downarrow0,
\]
where \(L\) is a positive function slowly varying at zero and \(0<\beta\le1\). Let \(\mathbf f^{(n)}\) denote the \(n\)-fold iterate of \(\mathbf f\), defined recursively by
\[
  \mathbf f^{(1)}:=\mathbf f,
  \qquad
  \mathbf f^{(n+1)}:=\mathbf f\circ\mathbf f^{(n)},
  \qquad n\ge1,
\]
and set \(\mathbf 0:=(0,\dots,0)\). In the finite-mean case, the lifetime distributions are assumed to be non-arithmetic and to satisfy
\[
  \lim_{n\to\infty}
  \frac{n\bigl(1-F_i(n)\bigr)}
       {\ip{v}{\one-\mathbf f^{(n)}(\mathbf 0)}}
  =0,
  \qquad i=1,\dots,K.
\]
Under these assumptions, the system undergoes local extinction when
\[
  N<\frac{\alpha_*}{\beta},
  \qquad
  \alpha_*:=\min_{1\le i\le K}\alpha_i.
\]

The survival side of the finite-mean multitype model was treated in \cite{KLRFiniteMean}. In that result, the type weights of the initial intensity are not arbitrary. For \(i=1,\dots,K\), define
\[
  \mu_i:=\int_0^\infty\bigl(1-F_i(t)\bigr)\,dt<\infty,
  \qquad
  \overline\mu:=\sum_{j=1}^K v_j\mu_j,
  \qquad
  \theta_i:=\frac{v_i\mu_i}{\overline\mu}.
\]
Then \(\sum_{i=1}^K\theta_i=1\), and the initial population is a Poisson random measure on \(\R^N\times\Kk\) with intensity
\[
  \overline\Lambda
  :=\sum_{i=1}^K\theta_i\,\lambda\otimes\delta_i.
\]
The lifetime distributions are non-arithmetic and satisfy, for some \(C>0\),
\[
  1-F_i(t)\le Ct^{-\eta_i},
  \qquad t>0,\quad i=1,\dots,K,
  \qquad \eta_i>1.
\]
Set
\[
  \eta:=\min_{1\le i\le K}\eta_i.
\]
The authors prove that, if
\[
  (\eta-1)\wedge\frac{N}{\alpha_*}>\frac1\beta,
\]
then
\[
  X_t\Rightarrow X_\infty,\qquad t\to\infty,
\]
where \(X_\infty\) is a nonzero random measure that preserves the initial intensity:
\[
  \E\ip{X_\infty}{\varphi}
  =\ip{\overline\Lambda}{\varphi},
  \qquad
  \varphi\in C_c^+(\R^N\times\Kk).
\]
Consequently, the system persists with full intensity.

The preceding results leave open the survival problem in the mixed finite--infinite mean regime. In the present paper, we address this problem by proving convergence of the particle system under the assumptions stated below. Specifically, type \(1\) has an infinite-mean lifetime, whereas types \(2,\dots,K\) have finite-mean lifetimes. Write \(\overline F_i(t):=1-F_i(t)\). We assume that \(F_i\) is non-arithmetic for every \(i\in\Kk\), and that
\[
  \overline F_1(t)\sim c_1t^{-\gamma},
  \qquad t\to\infty,
  \qquad c_1>0,\quad 0<\gamma<1,
\]
whereas
\[
  \overline F_i(t)\le Ct^{-\eta_i},
  \qquad t>0,\quad i=2,\dots,K,
  \qquad \eta_i>1.
\]
Set
\[
  \eta:=\min_{2\le i\le K}\eta_i,
\]
and assume that \(F_i(0)=0\) and \(m_{i,i}>0\) for every \(i\in\Kk\).

Let \(S:=\R^N\times\Kk\). The initial population is a Poisson random measure on \(S\) with intensity
\[
  \Lambda:=\sum_{i=1}^K a_i\,\lambda\otimes\delta_i,
  \qquad a_i\ge0,
\]
and we set
\[
  A:=\sum_{i=1}^K a_i.
\]
If
\[
  \rho:=\left(\eta-1\right)\wedge\frac{N}{\alpha_1}
  >
  \frac{\gamma}{\beta},
\]
then
\[
  X_t\Rightarrow X_\infty,
  \qquad t\to\infty,
\]
where \(X_\infty\) is a Poisson random measure on \(S\) with intensity
\[
  A\lambda\otimes\delta_1.
\]
Hence the total spatial intensity \(A\lambda\) is preserved, while the limiting measure is supported on the infinite-mean type. Moreover, if \(A>0\), then
\[
  \Pbb\{X_\infty\ne0\}=1.
\]
This result complements the mixed-regime extinction results in \cite{Kevei}.

The proof reduces the matrix renewal system to the scalar renewal process generated by successive return cycles of the embedded type chain to type \(1\). The return-cycle length is tail-equivalent to the type-\(1\) lifetime, while the finite-mean part of the cycle has tail \(o(t^{-\gamma})\). The convolution estimates required for the nonlinear remainder then follow from the elementary renewal identity and Blackwell's fixed-window renewal theorem for infinite-mean renewal processes; see \cite[Eq.~(1.1), p.~88]{Thorisson1987}.

The paper is organised as follows. Section~\ref{sec:model} defines the mixed multitype branching particle system and states the main theorem. Section~\ref{sec:main-proof} proves the theorem. Section~\ref{sec:auxiliary} contains the local hitting estimate, the Stieltjes convolution estimates, and the mixed finite--infinite mean renewal theorem. Section~\ref{sec:examples} gives examples satisfying the lifetime assumptions.

\section{The model and main results}\label{sec:model}

\subsection{The model}

The population at time \(t\ge0\) is represented by
\[
  X_t=\sum_k\delta_{(X_t^k,K_t^k)}\in\cM(S),
\]
where \(X_t^k\in\R^N\) and \(K_t^k\in\Kk\) are, respectively, the position and type of the \(k\)-th particle alive at time \(t\). For \((x,i)\in S\), let \(\Pbb_{(x,i)}\) and \(\E_{(x,i)}\) denote probability and expectation, respectively, for the system started from a single particle at \((x,i)\).

\subsubsection{Spatial motion}

A particle of type \(i\in\Kk\), born at \(x\in\R^N\), has position \(x+\xi_t^{(i)}\) at age \(t\), where \((\xi_t^{(i)})_{t\ge0}\) is an isotropic symmetric \(\alpha_i\)-stable process in \(\R^N\) started from \(0\), with \(\alpha_i\in(0,2]\). For every bounded Borel function \(g:\R^N\to\R\), define
\[
  P_{t,\alpha_i}g(x):=\E\bigl[g(x+\xi_t^{(i)})\bigr].
\]
For every \(g\in L^1(\R^N)\),
\[
  \int_{\R^N}P_{t,\alpha_i}g(x)\,dx
  =
  \int_{\R^N}g(x)\,dx,
  \qquad t\ge0.
\]

\subsubsection{Lifetimes}

The lifetime of a type-\(i\) particle has distribution function \(F_i\) on \([0,\infty)\), with \(F_i(0)=0\). Each \(F_i\) is non-arithmetic, that is, it is not supported on \(h\N_0\) for any \(h>0\), where \(\N_0:=\{0,1,2,\dots\}\). The distribution \(F_1\) satisfies
\begin{equation}\label{eq:F1-heavy-tail}
  \overline F_1(t)\sim c_1t^{-\gamma},
  \qquad t\to\infty,
  \qquad c_1>0,\quad 0<\gamma<1.
\end{equation}
For some \(C>0\) and exponents \(\eta_i>1\), the remaining lifetime distributions satisfy
\begin{equation}\label{eq:finite-type-tails}
  \overline F_i(t)\le Ct^{-\eta_i},
  \qquad t>0,\quad i=2,\dots,K.
\end{equation}
Set
\begin{equation}\label{eq:eta-def}
  \eta:=\min_{2\le i\le K}\eta_i>1
\end{equation}
and
\begin{equation}\label{eq:rho-def}
  \rho:=\left(\eta-1\right)\wedge\frac{N}{\alpha_1}.
\end{equation}

\subsubsection{Branching mechanism}

At its death time, a type-\(i\) particle produces \(\zeta_{i,j}\) offspring particles of type \(j\), \(j=1,\dots,K\), all born at the position at which their parent dies. The corresponding offspring vector is
\[
  \zeta_i:=(\zeta_{i,1},\dots,\zeta_{i,K})\in\N_0^K,
\]
and its probability generating function is
\[
  f_i(\mathbf s):=\E\left(\prod_{j=1}^K s_j^{\zeta_{i,j}}\right),
  \qquad
  \mathbf s=(s_1,\dots,s_K)\in[0,1]^K.
\]
For each particle, its spatial motion, lifetime, and offspring vector are mutually independent, and the corresponding triples are independent across particles. Conditional on their types and birth positions, all offspring particles evolve independently according to the type-dependent laws specified above.

Let \(\mathbf f:=(f_1,\dots,f_K)\) and \(\one:=(1,\dots,1)\). The mean offspring matrix is
\[
  M:=(m_{i,j})_{i,j=1}^K,
  \qquad
  m_{i,j}:=\frac{\partial f_i}{\partial s_j}(\one)
  =\E(\zeta_{i,j}).
\]
We assume that \(M\) is irreducible and stochastic and that
\begin{equation}\label{eq:positive-diagonal}
  m_{i,i}>0,
  \qquad i=1,\dots,K.
\end{equation}
Set \(u:=\one\), and let \(v=(v_1,\dots,v_K)\) be the stationary distribution of \(M\). Then
\[
  Mu=u,\qquad vM=v,\qquad
  \sum_{i=1}^K v_i=1,\qquad v_i>0,\qquad
  \ip{v}{u}=1.
\]
In particular, the Perron--Frobenius eigenvalue of \(M\) is \(1\), and the system is critical. We assume that, for some \(\beta\in(0,1]\) and some positive function \(L\) slowly varying at zero,
\begin{equation}\label{eq:branching-condition}
  x-\ip{v}{\one-\mathbf f(\one-ux)}
  \sim x^{1+\beta}L(x),
  \qquad x\downarrow0.
\end{equation}

\subsubsection{Initial configuration}

The initial configuration \(X_0\) is a Poisson random measure on \(S\) with intensity
\begin{equation}\label{eq:initial-intensity}
  \Lambda:=\sum_{i=1}^K a_i\,\lambda\otimes\delta_i,
  \qquad a_i\ge0.
\end{equation}
It is independent of all particle motions, lifetimes, and offspring vectors. Set
\begin{equation}\label{eq:A-total}
  A:=\sum_{i=1}^K a_i.
\end{equation}

\subsubsection{Associated Markov--renewal process}
\label{sec:mixed-renewal-framework}

Define the matrix of renewal kernels \(F=(F_{i,j})_{i,j=1}^K\) by
\[
  F_{i,j}(t):=m_{i,j}F_i(t),
  \qquad t\ge0,\quad i,j=1,\dots,K.
\]
Let \((Y_n)_{n\ge0}\) be the Markov chain on \(\Kk\) with transition matrix \(M\). Conditional on \((Y_n)_{n\ge0}\), let \((\tau_n)_{n\ge0}\) be independent random variables such that \(\tau_n\) has distribution function \(F_{Y_n}\), and define
\[
  S_0:=0,\qquad
  S_n:=\sum_{k=0}^{n-1}\tau_k,
  \qquad n\ge1.
\]
For \(i\in\Kk\), let \(\Pbb_i\) and \(\E_i\) denote probability and expectation, respectively, for this Markov--renewal process started from \(Y_0=i\). Set
\[
  \sigma_0:=0,\qquad
  \sigma_{n+1}:=\inf\{m>\sigma_n:Y_m=1\},
  \qquad n\ge0.
\]
Thus, \(\sigma_1\) is the first return time to type \(1\) under \(\Pbb_1\) and the first hitting time of type \(1\) under \(\Pbb_i\) when \(i\ne1\). Under \(\Pbb_1\), define the first return-cycle length by
\[
  C:=S_{\sigma_1}.
\]
Let \(F_C\) denote the law of \(C\) under \(\Pbb_1\), and define its renewal measure by
\[
  U_C:=\delta_0+\sum_{n=1}^{\infty}F_C^{*n}.
\]
We also write
\[
  U_C(t):=U_C([0,t]),
  \qquad t\ge0.
\]

\subsubsection{Laplace functionals}

For \(\varphi\in C_c^+(S)\), define
\[
  W_t(x,i):=\E_{(x,i)}\left(e^{-\ip{X_t}{\varphi}}\right),
  \qquad
  V_t(x,i):=1-W_t(x,i),
  \qquad (x,i)\in S.
\]
Set
\begin{equation}\label{eq:Gi-def}
  G_i(t):=\|V_t(\cdot,i)\|_1
  =\int_{\R^N}V_t(x,i)\,dx,
  \qquad i=1,\dots,K.
\end{equation}
For the Poisson initial configuration \eqref{eq:initial-intensity}, proceeding as in \cite[Proposition~2.1(b)]{KLRFiniteMean}, we obtain
\begin{equation}\label{eq:poisson-laplace}
\begin{split}
  \E\left(e^{-\ip{X_t}{\varphi}}\right)
  &=
  \exp\left\{
    -\sum_{i=1}^K a_i
    \int_{\R^N}
    \E_{(x,i)}\left(1-e^{-\ip{X_t}{\varphi}}\right)dx
  \right\}\\
  &=
  \exp\left\{-\sum_{i=1}^K a_iG_i(t)\right\}.
\end{split}
\end{equation}
\subsection{Main results}\label{subsec:main-theorem}

\begin{theorem}\label{thm:main-mixed}
Consider the critical multitype branching particle system defined in Section~\ref{sec:model}. Assume the space--lifetime condition
\begin{equation}\label{eq:main-dim-condition}
  \rho>\frac{\gamma}{\beta}.
\end{equation}
Then, for every \(\varphi\in C_c^+(S)\),
\begin{equation}\label{eq:Gi-main-limit}
  \lim_{t\to\infty}G_i(t)
  =
  G_1(0)
  =
  \int_{\R^N}\left(1-e^{-\varphi(x,1)}\right)dx,
  \qquad i=1,\dots,K.
\end{equation}
Consequently, \eqref{eq:poisson-laplace} yields
\begin{equation}\label{eq:laplace-main-limit}
  \E\left(e^{-\ip{X_t}{\varphi}}\right)
  \longrightarrow
  \exp\left\{
    -A\int_{\R^N}\left(1-e^{-\varphi(x,1)}\right)dx
  \right\},
  \qquad t\to\infty.
\end{equation}
Therefore,
\[
  X_t\Rightarrow X_\infty,
  \qquad t\to\infty,
\]
where \(X_\infty\) is a Poisson random measure on \(S\) with intensity
\[
  A\lambda\otimes\delta_1.
\]
If \(A>0\), then
\[
  \Pbb\{X_\infty\ne0\}=1.
\]
\end{theorem}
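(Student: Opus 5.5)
We follow the standard route for Bellman--Harris particle systems: derive an integral equation for \(V_t\), integrate it in space to get a renewal system for \(G=(G_1,\dots,G_K)\) with a nonnegative nonlinear remainder, and then analyse this system with the return-cycle renewal measure \(U_C\).

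\textbf{Step 1 (the integral equation).} Conditioning on the first death time of the ancestor, in the usual way (cf.\ \cite{KLRFiniteMean}), gives
\[
  V_t(x,i)=\overline F_i(t)\,P_{t,\alpha_i}\bigl(1-e^{-\varphi(\cdot,i)}\bigr)(x)
  +\int_0^t P_{s,\alpha_i}\bigl[1-f_i(\one-V_{t-s})\bigr](x)\,dF_i(s).
\]
We then write \(1-f_i(\one-\mathbf y)=\sum_j m_{i,j}y_j-R_i(\mathbf y)\), where \(0\le R_i(\mathbf y)\) by convexity. Integrating over \(x\) and using mass preservation of \(P_{t,\alpha_i}\), we obtain the Markov renewal system
\[
  G_i(t)=\overline F_i(t)\,g_i+\sum_j \bigl(F_{i,j}*G_j\bigr)(t)-\bigl(F_i*\rho_i\bigr)(t),
  \qquad g_i:=\|1-e^{-\varphi(\cdot,i)}\|_1,
\]
where \(\rho_i(t):=\int R_i(V_t(x,\cdot))\,dx\ge0\).

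\textbf{Step 2 (reduction to type 1 and the linear part).} Dropping \(\rho\) gives the first-moment system. We solve it by renewal at successive visits of \((Y_n)\) to type \(1\). For \(i\ne1\), \(G_i=\E_i[\text{terms before }\sigma_1]+\E_i[G_1(t-S_{\sigma_1})]\). For type \(1\), \(G_1=z*U_C\), where \(z(t)=\E_1\sum_{n<\sigma_1}\overline F_{Y_n}(t-S_n)g_{Y_n}\ind{S_n\le t}\).

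We then apply the mixed renewal theorem of Section~\ref{sec:auxiliary}. The tail of \(C\) is \(\sim c_1t^{-\gamma}\), since the finite-mean part has tail \(o(t^{-\gamma})\). The driving term is \(z=\overline F_1 g_1+(\text{finite-mean part})\), and \(\overline F_1*U_C\to1\) because \(F_C\) and \(F_1\) are tail-equivalent (elementary renewal identity \(\overline F_C*U_C\equiv1\)). The finite-mean contributions \(\overline F_j g_j\), \(j\ne1\), are integrable, and convolving them with \(U_C\), whose increments over fixed windows vanish by Blackwell's infinite-mean theorem, gives a limit \(0\). Hence the linear solution converges to \(g_1=G_1(0)\) for every starting type. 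This explains why only type \(1\) survives in the limit: asymptotically all mass sits in long type-\(1\) lives.

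\textbf{Step 3 (the nonlinear remainder, the main obstacle).} We must show \((\rho*\,\text{stuff})*U_C\to0\); it suffices to show \(\int_0^t \rho(t-s)\,U_C(ds)\to0\). We bound \(\rho_i(t)\lesssim\int V_t^{1+\beta}L(V_t)\,dx\le \|V_t\|_\infty^{\beta}L(\cdot)\,G(t)\) using \eqref{eq:branching-condition} and the spectral decomposition with \(v\). We then need a sup-norm decay \(\|V_t\|_\infty\lesssim t^{-\rho+\epsilon}\); this is the local hitting estimate. Its ingredients are that the type-\(1\) motion, spreading like \(t^{1/\alpha_1}\), dilutes the mass (density \(t^{-N/\alpha_1}\)), and that the finite-mean types contribute via their tails at rate \(t^{-(\eta-1)}\). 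Given that, \(\rho(t)\lesssim t^{-\beta\rho+\epsilon}\).

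Since \(U_C(ds)\) has density of order \(s^{\gamma-1}\) in the Blackwell/strong-renewal sense (\(U_C(t)\asymp t^{\gamma}\)), the Stieltjes convolution estimate gives
\[
  \int_0^t (t-s)^{-\beta\rho}\,U_C(ds)\to0
  \quad\text{whenever}\quad \beta\rho>\gamma,
\]
which is exactly condition \eqref{eq:main-dim-condition}. The delicate point is the near-diagonal part \(s\approx t\), where we only have fixed-window Blackwell control \(U_C(t-h,t]\to0\) rather than a local density. We handle it by splitting at \(t-h\): the window piece is bounded by \(\sup\rho\cdot U_C(t-h,t]\to0\), and the remaining piece uses the polynomial decay together with the \(t^{\gamma}\) growth of \(U_C\) via summation over dyadic blocks.

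\textbf{Step 4 (conclusion).} We combine the linear limit with the vanishing remainder to get \eqref{eq:Gi-main-limit}. Plugging this into \eqref{eq:poisson-laplace} yields \eqref{eq:laplace-main-limit}. Convergence of Laplace functionals for all \(\varphi\in C_c^+(S)\) gives \(X_t\Rightarrow X_\infty\), the Poisson measure with intensity \(A\lambda\otimes\delta_1\). Finally, this measure has infinite total mass when \(A>0\), so \(\Pbb\{X_\infty\ne0\}=1\).
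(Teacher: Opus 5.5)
Your proposal is correct and follows essentially the paper's route: the integrated renewal system for \(G\) with nonnegative remainder, and the return-cycle reduction to \(U_C\) using \(\overline F_C\sim\overline F_1\), \(\overline F_C*U_C\equiv1\) and Blackwell's fixed-window theorem. It also uses, as the paper does, the local hitting estimate together with \(R_i(y)\le C_\delta\|y\|_\infty^{1+\beta-\delta}\) and \(\sup_t\sum_iG_i(t)<\infty\); you use the last bound implicitly and should state it, e.g.\ via \(G_i\le H_i^{\varphi}\), or because \(G\) is dominated by the linear solution when the remainder is nonnegative.

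The one real difference is that you show \(U_C*\rho_i\to0\) directly by dyadic blocks for your remainder \(\rho_i\) (the paper's \(B_i\)). This works provided you use the uniform block bound \(U_C((x,x+y])\le U_C([0,y])\le 1/\overline F_C(y)\le C(1+y)^{\gamma}\), rather than a local density of order \(s^{\gamma-1}\), which is a strong-renewal statement that can fail for \(\gamma\le1/2\). The paper instead proves \(F_1*B_1=o(\overline F_1)\) and \(F_i*B_i=O(t^{-\kappa_i'})\), and feeds these into the single lemma ``\(h=o(\overline F_C)\Rightarrow U_C*h\to0\)''.
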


\begin{remark}
Assume \(A>0\), and set
\[
  \alpha:=\min_{1\le i\le K}\alpha_i,
  \qquad
  N_c:=\frac{\alpha_1\gamma}{\beta}.
\]
Condition \eqref{eq:main-dim-condition} implies \(N>N_c\). We compare Theorem~\ref{thm:main-mixed} with the local-extinction results in \cite[Theorems~2--4]{Kevei}.

\emph{Case A: \(\alpha=\alpha_1\).} If
\[
  \eta-1>\frac{N}{\alpha},
\]
then local extinction holds for \(N<N_c\). Hence, whenever the hypotheses of both results are satisfied, local extinction holds below \(N_c\), survival holds above \(N_c\), and the case \(N=N_c\) remains open.

\emph{Case B: \(\alpha<\alpha_1\).} Suppose first that
\[
  \alpha\ge\alpha_1\gamma
  \qquad\text{and}\qquad
  \gamma\eta>\frac{N}{\alpha}+1.
\]
Then local extinction holds for \(N<N_c\). Thus, under the hypotheses of both results, the only dimension not covered by the extinction and survival conclusions is \(N=N_c\).

Suppose now that
\[
  \alpha<\alpha_1\gamma
  \qquad\text{and}\qquad
  \gamma\eta>\frac{N}{\alpha}+1.
\]
In this case, local extinction holds for
\[
  N<N_+,
  \qquad
  N_+:=
  \frac{\gamma\alpha\alpha_1}
       {\alpha_1(\beta+1)\gamma-\alpha}.
\]
Since \(N_+<N_c\), the available extinction and survival results leave the interval
\[
  N_+\le N\le N_c
\]
unresolved.
\end{remark}

\section{Proof of the main results}\label{sec:main-proof}

\begin{proof}[Proof of Theorem~\ref{thm:main-mixed}]
Let \(\psi\in C_c^+(S)\), set \(\psi_i(x):=\psi(x,i)\), and define
\[
  H_i^\psi(t):=\int_{\R^N}\E_{(x,i)}\bigl(\ip{X_t}{\psi}\bigr)\,dx,
  \qquad i=1,\dots,K.
\]
By \cite[Proposition~2.1(a)]{KLRFiniteMean}, \(H^\psi:=(H_1^\psi,\dots,H_K^\psi)\) is locally bounded and satisfies
\begin{equation}\label{eq:first-moment-mixed}
  H_i^\psi(t)
  =
  \overline F_i(t)\|\psi_i\|_1
  +\sum_{j=1}^K m_{i,j}\int_{[0,t]}H_j^\psi(t-s)\,dF_i(s),
  \qquad i=1,\dots,K.
\end{equation}
Thus \(H^\psi\) solves the linear renewal system with
\[
  z_i^\psi(t):=\overline F_i(t)\|\psi_i\|_1,
  \qquad i=1,\dots,K.
\]
For \(i=2,\dots,K\), \eqref{eq:F1-heavy-tail}, \eqref{eq:finite-type-tails}, and \(\eta_i>1>\gamma\) give
\[
  z_i^\psi(t)=O(t^{-\eta_i})=o\bigl(\overline F_1(t)\bigr),
  \qquad t\to\infty.
\]
Moreover,
\[
  z_1^\psi(t)=a\overline F_1(t)+r_1(t),
  \qquad
  a=\|\psi_1\|_1,
  \qquad
  r_1\equiv0.
\]
Thus \(r_1,z_2^\psi,\dots,z_K^\psi\) satisfy condition \textup{(H)} of Theorem~\ref{thm:mixed-renewal}, which yields
\begin{equation}\label{eq:first-moment-limit-mixed}
  H_i^\psi(t)\longrightarrow\|\psi_1\|_1,
  \qquad t\to\infty,\quad i=1,\dots,K.
\end{equation}
Together with the local boundedness of \(H^\psi\), this implies
\begin{equation}\label{eq:first-moment-bound-mixed}
  \sup_{t\ge0}\sum_{i=1}^K H_i^\psi(t)<\infty.
\end{equation}
Taking \(\psi=\varphi\) and using
\[
  V_t(x,i)
  =
  \E_{(x,i)}\left(1-e^{-\ip{X_t}{\varphi}}\right)
  \le
  \E_{(x,i)}\bigl(\ip{X_t}{\varphi}\bigr),
\]
we obtain
\begin{equation}\label{eq:G-L1-bound}
  \sup_{t\ge0}\sum_{i=1}^K G_i(t)<\infty.
\end{equation}

Let \(D_\varphi\subset\R^N\) be compact and satisfy
\[
  \operatorname{supp}\varphi\subset D_\varphi\times\Kk.
\]
Since
\[
  1-e^{-\ip{X_t}{\varphi}}
  \le \ind{X_t(D_\varphi\times\Kk)>0},
\]
Lemma~\ref{lem:local-hitting-kevei} gives
\begin{equation}\label{eq:V-sup-bound-mixed}
  \sup_{x\in\R^N,\ i\in\Kk}V_t(x,i)
  \le C_\varphi(1+t)^{-\rho},
  \qquad t\ge0.
\end{equation}

For \(y=(y_1,\dots,y_K)\in[0,1]^K\), define
\begin{equation}\label{eq:R-def-main}
  R_i(y):=\sum_{j=1}^K m_{i,j}y_j-\bigl(1-f_i(\one-y)\bigr),
  \qquad i=1,\dots,K,
\end{equation}
and
\begin{equation}\label{eq:B-def-main}
  B_i(t):=\int_{\R^N}R_i\bigl(V_t(x,1),\dots,V_t(x,K)\bigr)\,dx.
\end{equation}
The proof of \cite[Theorem~2.3]{KLRFiniteMean} shows that each \(R_i\) is nonnegative and coordinatewise nondecreasing on \([0,1]^K\), and that each \(B_i\) is nonnegative, Borel measurable, and locally bounded. Moreover, for every \(\delta\in(0,\beta)\), after increasing the constant if necessary,
\begin{equation}\label{eq:Ri-bound}
  R_i(y)\le C_\delta\|y\|_\infty^{1+\beta-\delta},
  \qquad y\in[0,1]^K,\quad i=1,\dots,K.
\end{equation}
The same proof gives
\begin{equation}\label{eq:G-renewal-nonlinear}
  G_i(t)
  =
  z_i(t)+\sum_{j=1}^K m_{i,j}\int_{[0,t]}G_j(t-s)\,dF_i(s),
  \qquad i=1,\dots,K,
\end{equation}
where
\begin{equation}\label{eq:z-main-def}
  z_i(t):=\overline F_i(t)G_i(0)-\int_{[0,t]}B_i(t-s)\,dF_i(s).
\end{equation}
In particular, each \(z_i\) is Borel measurable and locally bounded.

By \eqref{eq:Ri-bound}, \eqref{eq:V-sup-bound-mixed}, and \eqref{eq:G-L1-bound},
\[
\begin{split}
  B_i(t)
  &\le C\int_{\R^N}
  \left(\max_{1\le j\le K}V_t(x,j)\right)^{1+\beta-\delta}dx\\
  &\le
  C\left(\sup_{x\in\R^N,\ j\in\Kk}V_t(x,j)\right)^{\beta-\delta}
  \sum_{j=1}^K\|V_t(\cdot,j)\|_1\\
  &\le C(1+t)^{-\rho(\beta-\delta)}.
\end{split}
\]
By the space--lifetime condition \eqref{eq:main-dim-condition}, choose \(\delta\in(0,\beta)\) sufficiently small that
\begin{equation}\label{eq:kappa-choice}
  \kappa:=\rho(\beta-\delta)>\gamma.
\end{equation}
Consequently,
\begin{equation}\label{eq:B-kappa-bound}
  B_i(t)\le C(1+t)^{-\kappa},
  \qquad t\ge0,\quad i=1,\dots,K.
\end{equation}

We now verify the assumptions of Theorem~\ref{thm:mixed-renewal}. For \(i=1\), write
\[
  z_1(t)=G_1(0)\overline F_1(t)+r_1(t),
  \qquad
  r_1(t):=-\int_{[0,t]}B_1(t-s)\,dF_1(s).
\]
By \eqref{eq:B-kappa-bound} and Lemma~\ref{lem:F1-stieltjes},
\[
  |r_1(t)|=o\bigl(\overline F_1(t)\bigr),
  \qquad t\to\infty.
\]
Thus \(r_1\) satisfies condition \textup{(H)} of Theorem~\ref{thm:mixed-renewal}.

For \(i=2,\dots,K\),
\[
  \overline F_i(t)G_i(0)\le C(1+t)^{-\eta_i},
\]
where \(\eta_i>1>\gamma\). By \eqref{eq:B-kappa-bound} and Lemma~\ref{lem:finite-tail-stieltjes},
\[
  \int_{[0,t]}B_i(t-s)\,dF_i(s)
  \le C(1+t)^{-\kappa_i'},
  \qquad
  \kappa_i':=\min\{\eta_i,\kappa\}>\gamma.
\]
Therefore,
\[
  |z_i(t)|\le C(1+t)^{-\kappa_i'},
  \qquad i=2,\dots,K.
\]
Since \(\overline F_1(t)\sim c_1t^{-\gamma}\) and \(\kappa_i'>\gamma\),
\[
  \frac{|z_i(t)|}{\overline F_1(t)}
  =O\bigl(t^{\gamma-\kappa_i'}\bigr)
  \longrightarrow0,
  \qquad t\to\infty.
\]
Hence each \(z_i\), \(i\ge2\), satisfies condition \textup{(H)} of Theorem~\ref{thm:mixed-renewal}.

Applying Theorem~\ref{thm:mixed-renewal} to \eqref{eq:G-renewal-nonlinear} with \(a=G_1(0)\), we obtain
\[
  G_i(t)\longrightarrow G_1(0),
  \qquad t\to\infty,\quad i=1,\dots,K.
\]
This proves \eqref{eq:Gi-main-limit}.

By \eqref{eq:poisson-laplace},
\[
  \E\left(e^{-\ip{X_t}{\varphi}}\right)
  =
  \exp\left\{-\sum_{i=1}^K a_iG_i(t)\right\}.
\]
Using \eqref{eq:Gi-main-limit}, we obtain
\[
  \sum_{i=1}^K a_iG_i(t)
  \longrightarrow
  \left(\sum_{i=1}^K a_i\right)G_1(0)
  =
  A\int_{\R^N}\left(1-e^{-\varphi(x,1)}\right)dx.
\]
This proves \eqref{eq:laplace-main-limit}. The right-hand side is the Laplace functional of a Poisson random measure with intensity \(A\lambda\otimes\delta_1\).

It remains only to justify tightness. As in the proof of
Proposition~2.1(a) of \cite{KLRFiniteMean}, let \(K_0\subset S\) be compact
and choose \(\psi\in C_c^+(S)\) such that \(\psi\ge\mathbf1_{K_0}\).
By Markov's inequality, the Poisson initial intensity, and
\eqref{eq:first-moment-bound-mixed},
\[
  \Pbb\{X_t(K_0)>b\}
  \le \frac{\E\ip{X_t}{\psi}}{b}
  =\frac{1}{b}\sum_{i=1}^Ka_iH_i^\psi(t)
  \le\frac{C_{K_0}}{b},
  \qquad t\ge0.
\]
Hence
\[
  \lim_{b\to\infty}\sup_{t\ge0}\Pbb\{X_t(K_0)>b\}=0.
\]
By the tightness criterion for random Radon measures,
\cite[Lemma~4.5]{Olav}, the family
\(\{\mathcal L(X_t):t\ge0\}\) is tight in the vague topology.

Recall that \(S=\R^N\times\Kk\), where \(\Kk\) is finite. Hence \(S\) is a
locally compact, second countable Hausdorff space, and the convergence
criterion for random measures in the vague topology,
\cite[Theorem~4.2]{Olav}, applies on \(S\).

By \eqref{eq:laplace-main-limit}, for every \(\varphi\in C_c^+(S)\),
\[
  \E\exp\{-\ip{X_t}{\varphi}\}
  \longrightarrow
  \E\exp\{-\ip{X_\infty}{\varphi}\}.
\]
Thus the Laplace functionals converge on the class
\(C_c^+(S)\). By Theorem~4.2 of \cite{Olav},
\[
  X_t\Rightarrow X_\infty,\qquad t\to\infty,
\]
in the space of Radon measures on \(S\) endowed with the vague topology. If
\(A>0\), the limiting Poisson random measure has infinite total mass on
\(\R^N\times\{1\}\), and in particular is nonzero almost surely. The proof is
complete.

\end{proof}

\section{\texorpdfstring{Auxiliary results}{Auxiliary results}}\label{sec:auxiliary}

Section~\ref{sec:auxiliary} contains the auxiliary renewal results, including
the local hitting estimate, the Stieltjes convolution estimates, and the mixed
finite--infinite mean renewal theorem.

\begin{lemma}
\label{lem:local-hitting-kevei}
Consider the branching particle system described in Section~\ref{sec:model}, and assume that
the lifetime distributions satisfy \eqref{eq:F1-heavy-tail} and
\eqref{eq:finite-type-tails}. Then, for every compact set
\(D\subset\mathbb R^N\), there exists \(C_D>0\) such that

\begin{equation}\label{eq:hitting-bound}
  \sup_{x\in\R^N,\ i\in\Kk}
  \Pbb_{(x,i)}\{X_t(D\times\Kk)>0\}
  \le C_D(1+t)^{-\rho},
  \qquad t\ge0 .
\end{equation}
\end{lemma}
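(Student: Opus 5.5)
The plan is a first-moment bound along the ancestral spine, followed by a heat-kernel estimate driven by the time the spine spends in type \(1\). By Markov's inequality,
\[
  \Pbb_{(x,i)}\{X_t(D\times\Kk)>0\}\le \E_{(x,i)}X_t(D\times\Kk).
\]
Because \(M\) is stochastic, the many-to-one formula (the same first-moment renewal system \eqref{eq:first-moment-mixed}, now unfolded generation by generation) writes this mean as an expectation over the Markov--renewal process \((Y_n,\tau_n)\) of Section~\ref{sec:mixed-renewal-framework}. Let \(\nu_t:=\max\{n:S_n\le t\}\) be the index of the spine particle alive at time \(t\), and let \(Z_t\) be its position. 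Then \(Z_t\) is a sum of independent increments \(\xi^{(Y_k)}\) run for the durations \(\tau_k\wedge(t-S_k)\). Hence
\[
  \E_{(x,i)}X_t(D\times\Kk)=\E_i\bigl[\Pbb(x+Z_t\in D\mid (Y_n,\tau_n)_n)\bigr].
\]

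Let \(T_1(t)\) be the total time before \(t\) that the spine spends in type \(1\). Conditionally on the types and lifetimes, \(Z_t\) contains an independent symmetric \(\alpha_1\)-stable component run for time \(T_1(t)\). Convolving any law with that component gives a density bounded by \(c\,T_1(t)^{-N/\alpha_1}\). Therefore, uniformly in \(x\),
\[
  \Pbb(x+Z_t\in D\mid\cdot)\le \min\bigl(1,\,c\,|D|\,T_1(t)^{-N/\alpha_1}\bigr).
\]
Splitting according to whether \(T_1(t)\ge t/2\) gives
\[
  \sup_x\E_{(x,i)}X_t(D\times\Kk)\le C_D\,t^{-N/\alpha_1}+\Pbb_i\{T_1(t)<t/2\}.
\]
The lemma therefore reduces to the purely renewal-theoretic estimate
\[
  \Pbb_i\{T_1(t)<t/2\}\le C(1+t)^{-(\eta-1)}.
\]
For small \(t\) the trivial bound \(1\) is enough.

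To prove this estimate I will use the return-cycle decomposition of Section~\ref{sec:mixed-renewal-framework}. The initial segment before the first hitting of type \(1\) (for \(i\ne1\)) is a sum of a geometric number of finite-type lifetimes. Its tail is \(O(t^{-\eta})\), since the number of steps has exponential tails and a sum of a light-tailed number of \(t^{-\eta}\)-tailed variables keeps the \(t^{-\eta}\) tail. Each cycle \(C\) equals a type-\(1\) lifetime plus a finite-type part \(E\), and \(E\) also has \(\Pbb\{E>s\}=O(s^{-\eta})\). The event \(\{T_1(t)<t/2\}\) requires the finite-type time accumulated before \(t\) to exceed \(t/2\). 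I split it into two pieces.
\begin{enumerate}[label=(\alph*)]
\item The spine at time \(t\) sits in a single finite-type excursion that started at some renewal epoch \(s\) with length exceeding about \(t/4\). By the renewal decomposition its probability is at most
\[
  \int U_C(ds)\,\Pbb\{E>(t-s)\vee t/4\}\lesssim U_C(t)\,t^{-\eta}\lesssim t^{\gamma-\eta}\le t^{1-\eta},
\]
using \(U_C(t)\asymp t^{\gamma}\), which follows from the elementary renewal identity for the infinite-mean cycle law \(F_C\).
\item The \(\nu\) cycles completed before \(t\) have finite-type parts summing to more than \(t/4\). Here I use a large-deviation bound for sums of \(t^{-\eta}\)-tailed variables, such as Fuk--Nagaev, \(\Pbb\{E_1+\dots+E_n>t/4\}\lesssim n\,t^{-\eta}\) for \(n\lesssim t\), together with \(\E\nu\le U_C(t)\lesssim t^{\gamma}\) and a crude tail bound for \(\nu\).
\end{enumerate}
Both pieces are \(O(t^{1-\eta})\). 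Combining this with the density bound yields \eqref{eq:hitting-bound} with exponent \(\rho\).

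The main obstacle is piece (b). The number of cycles before \(t\) is not bounded, because type-\(1\) lifetimes may be short, and the means \(\E E_k\) add up. What I would try first is to observe that \(\sum_{k\le\nu}E_k\) has mean \(O(t^{\gamma})=o(t)\), because \(\E E<\infty\) and \(\E\nu\lesssim t^\gamma\). I would then control the fluctuation by truncating each \(E_k\) at level \(t^{1-\epsilon}\): the truncated sum is handled by a higher-moment (Rosenthal-type) bound, and the untruncated large jumps are handled by a union bound giving \(t^{\gamma}\,t^{-\eta(1-\epsilon)}\). Since \(\gamma<1\), taking \(\epsilon\) small keeps this below \(t^{1-\eta}\). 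If that bookkeeping fails, the fallback is to replace \(\eta-1\) by \(\eta-\gamma-\epsilon'\), which is at least as strong.
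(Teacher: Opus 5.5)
Your argument is correct, and it takes a different route from the paper. The paper gives no self-contained proof. It imports the bound $C_D(t^{-N/\alpha_1}+t^{1-\eta})$ from Kevei's Lemma~11 and Section~4.3 and checks only two things: the embedded type chain is ergodic (irreducibility plus $m_{i,i}>0$), and the normalization $\overline F_1(t)\sim c_1t^{-\gamma}$ changes only constants.

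You instead build the estimate from scratch:
\begin{enumerate}[label=(\roman*)]
\item Markov's inequality;
\item the many-to-one formula along the spine, which is legitimate because $M$ is stochastic and $S_n\to\infty$;
\item the heat-kernel bound driven by the type-$1$ occupation time $T_1(t)$;
\item a renewal estimate for $\Pbb_i\{T_1(t)<t/2\}$ built on the return-cycle structure of Section~\ref{sec:mixed-renewal-framework}.
\end{enumerate}
The paper's route is shorter, but it leaves the reader to match Kevei's hypotheses to the present setting. Yours is self-contained. It also naturally yields the second term as $t^{\gamma-\eta+\epsilon}$, which is stronger than $t^{1-\eta}$. For that reason your closing ``fallback'' is backwards: $\eta-\gamma-\epsilon'>\eta-1$ is a strengthening, and it is what your argument actually produces. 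In piece (a), only the upper bound $U_C(t)\le 1/\overline F_C(t)\lesssim t^{\gamma}$ is needed, and that is exactly what the renewal identity gives.

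Two statements in piece (b) need correcting before the sketch is a proof.
\begin{itemize}
\item The bound $\Pbb\{E_1+\dots+E_n>t/4\}\lesssim nt^{-\eta}$ is false for $n$ of order $t$. Once $n\E E>t/4$, the law of large numbers makes this probability tend to $1$. The bound holds only for $n=o(t)$.
\item Markov's inequality with $\E\nu\lesssim t^{\gamma}$ gives too weak a tail for $\nu$ when $\eta\ge 2-\gamma$.
\end{itemize}
A clean fix goes as follows. Let $L_k$ denote the successive type-$1$ lifetimes and set $n_t:=t^{\gamma+\epsilon}$ with $\gamma+\epsilon<1$. Since $E_k\ge0$,
\[
  \Pbb\Bigl\{\sum_{k<\nu}E_k>t/6\Bigr\}
  \le
  \Pbb\{L_0+\dots+L_{n_t-1}\le t\}
  +\Pbb\{E_0+\dots+E_{n_t-1}>t/6\}.
\]
This split needs no independence between $\nu$ and the $E_k$.

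For the first term, use the Chernoff bound with $\theta=1/t$ together with $1-\E e^{-L_0/t}\sim c_1\Gamma(1-\gamma)t^{-\gamma}$:
\[
  \Pbb\{L_0+\dots+L_{n-1}\le t\}\le e\,\bigl(\E e^{-L_0/t}\bigr)^{n}\le e\exp(-cnt^{-\gamma}).
\]
At $n=n_t$ this is $e\exp(-ct^{\epsilon})$, which is superpolynomially small.

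For the second term, $n_t\E E=o(t)$. Your truncation at $t^{1-\epsilon'}$, with the union bound for large jumps and Rosenthal's inequality for the rest, then gives $O(n_tt^{-\eta})=O(t^{\gamma+\epsilon-\eta})=o(t^{1-\eta})$. Use thresholds $t/6$ for the initial segment, the current excursion and the completed excursions, so that the three pieces together cover the event $\{t-T_1(t)>t/2\}$.
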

\begin{proof}
The argument in \cite[Lemma~11 and Section~4.3]{Kevei}, applied with initial age \(0\), remains valid under the present assumptions. Its input concerning the embedded type chain is ergodicity, which follows from the irreducibility of \(M\) and \eqref{eq:positive-diagonal}. Moreover, replacing the normalization \(\overline F_1(t)\sim t^{-\gamma}\) used there by \eqref{eq:F1-heavy-tail} only changes the constant in the estimate. Therefore,
\[
  \sup_{x\in\R^N,\ i\in\Kk}
  \Pbb_{(x,i)}\{X_t(D\times\Kk)>0\}
  \le C_D\left(t^{-N/\alpha_1}+t^{1-\eta}\right),
  \qquad t\ge1.
\]
Since
\[
  t^{-N/\alpha_1}+t^{1-\eta}
  \le C(1+t)^{-\rho},
  \qquad
  \rho=(\eta-1)\wedge\frac{N}{\alpha_1},
\]
the result follows after increasing \(C_D\) to cover \(0\le t<1\).
\end{proof}

\begin{lemma}\label{lem:infinite-mean-renewal-negligibility}
Let \(X\) be a strictly positive, non-arithmetic random variable with distribution function \(F\), renewal measure
\[
  U:=\sum_{n=0}^{\infty}F^{*n},
  \qquad F^{*0}:=\delta_0,
\]
and tail
\[
  \overline F(t):=1-F(t)=\Pbb\{X>t\}
  \sim ct^{-\gamma},
  \qquad t\to\infty,
  \qquad c>0,\quad 0<\gamma<1.
\]
Let \(h:[0,\infty)\to\R\) be locally bounded and Borel measurable, and extend it by zero to \((-\infty,0)\). If
\begin{equation}\label{eq:renewal-small-relative-tail}
  |h(t)|=o\bigl(\overline F(t)\bigr),
  \qquad t\to\infty,
\end{equation}
then
\begin{equation}\label{eq:renewal-negligibility-X}
  \int_{[0,t]}h(t-s)\,U(ds)\longrightarrow0,
  \qquad t\to\infty.
\end{equation}
In particular, for every \(\kappa>\gamma\),
\begin{equation}\label{eq:renewal-power-negligibility-X}
  \int_{[0,t]}(1+t-s)^{-\kappa}\,U(ds)\longrightarrow0,
  \qquad t\to\infty.
\end{equation}
\end{lemma}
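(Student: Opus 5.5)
Split the convolution at a large fixed window and treat the two pieces separately. Two renewal facts for infinite-mean \(F\) will be used. First, Blackwell's fixed-window theorem, as cited in the introduction (\cite[Eq.~(1.1), p.~88]{Thorisson1987}), gives
\[
  U((t,t+1])\longrightarrow0,\qquad t\to\infty;
\]
together with local finiteness of \(U\), this yields \(\sup_{t\ge0}U((t,t+1])=:c_U<\infty\). Second, the elementary renewal identity
\[
  \int_{[0,t]}\overline F(t-s)\,U(ds)=1,\qquad t\ge0,
\]
follows from \(U=\delta_0+F*U\).

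Fix \(\varepsilon>0\). By \eqref{eq:renewal-small-relative-tail}, choose \(T\) such that \(|h(u)|\le\varepsilon\overline F(u)\) for \(u\ge T\), and set \(M_T:=\sup_{0\le u\le T}|h(u)|<\infty\), using local boundedness. For \(t>T\), split \([0,t]\) into \(\{s:t-s\ge T\}\) and \(\{s:t-s<T\}\).

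On the first set,
\[
  \int_{\{t-s\ge T\}}|h(t-s)|\,U(ds)\le\varepsilon\int_{[0,t]}\overline F(t-s)\,U(ds)=\varepsilon.
\]
On the second set, \(s\in(t-T,t]\), so
\[
  \int_{\{t-s<T\}}|h(t-s)|\,U(ds)\le M_T\,U((t-T,t])\le M_T\sum_{k=0}^{\lceil T\rceil-1}U((t-T+k,\,t-T+k+1]).
\]
Each of the finitely many terms on the right tends to \(0\) as \(t\to\infty\) by Blackwell's theorem. Hence \(\limsup_{t\to\infty}\bigl|\int_{[0,t]}h(t-s)\,U(ds)\bigr|\le\varepsilon\), and since \(\varepsilon\) is arbitrary, \eqref{eq:renewal-negligibility-X} follows.

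For the special case \eqref{eq:renewal-power-negligibility-X}, take \(h(u)=(1+u)^{-\kappa}\). This is bounded and measurable, and since \(\kappa>\gamma\),
\[
  \frac{(1+u)^{-\kappa}}{\overline F(u)}\sim c^{-1}u^{\gamma-\kappa}(1+u)^{-\kappa}u^{\kappa}\longrightarrow0,
\]
so \(h(u)=o(\overline F(u))\) and the general statement applies.

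The only nontrivial input is Blackwell's theorem in the infinite-mean, non-arithmetic case. Non-arithmeticity of \(F\) is needed here; strict positivity of \(X\) ensures \(U\) is locally finite. Everything else reduces to the two-region split above.
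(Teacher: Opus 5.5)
Your proposal is correct and follows essentially the same argument as the paper. Both split at a fixed window $T$, bound the far part by $\varepsilon$ via the renewal identity $\int_{[0,t]}\overline F(t-s)\,U(ds)=1$, and make the near part $M_T\,U((t-T,t])$ vanish using Blackwell's theorem in the infinite-mean case; the only cosmetic differences are that you derive the identity from $U=\delta_0+F*U$ rather than from $\sum_n\Pbb\{Z_n\le t<Z_{n+1}\}=1$, and you cover the window by unit intervals instead of applying Blackwell directly with window length $T$.
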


\begin{proof}
Write \(U(t):=U([0,t])\). Let \((X_n)_{n\ge1}\) be independent copies of \(X\), and set
\[
  Z_0:=0,
  \qquad
  Z_n:=X_1+\cdots+X_n,
  \qquad n\ge1.
\]
The elementary renewal identity gives
\begin{equation}\label{eq:renewal-tail-identity-general}
  \int_{[0,t]}\overline F(t-s)\,U(ds)=1,
  \qquad t\ge0.
\end{equation}
Indeed, since \(X>0\) almost surely, \(Z_n\to\infty\) almost surely, and therefore
\[
\begin{split}
  \int_{[0,t]}\overline F(t-s)\,U(ds)
  &=\sum_{n=0}^{\infty}
  \E\left[\overline F(t-Z_n)\ind{Z_n\le t}\right]\\
  &=\sum_{n=0}^{\infty}\Pbb\{Z_n\le t<Z_{n+1}\}
  =1.
\end{split}
\]

Fix \(\varepsilon>0\). By \eqref{eq:renewal-small-relative-tail}, there exists \(T>0\) such that
\[
  |h(u)|\le\varepsilon\overline F(u),
  \qquad u\ge T.
\]
For \(t>T\),
\[
\begin{split}
  \left|\int_{[0,t]}h(t-s)\,U(ds)\right|
  &\le
  \int_{[0,t-T]}|h(t-s)|\,U(ds)
  +\int_{(t-T,t]}|h(t-s)|\,U(ds).
\end{split}
\]
By \eqref{eq:renewal-tail-identity-general}, the first term is bounded by \(\varepsilon\). Since \(h\) is locally bounded,
\[
  M_T:=\sup_{0\le u\le T}|h(u)|<\infty,
\]
and the second term is bounded by
\[
  M_T\bigl(U(t)-U(t-T)\bigr).
\]
The tail assumption and \(0<\gamma<1\) imply
\[
  \E X=\int_0^\infty\overline F(u)\,du=\infty.
\]
Hence Blackwell's fixed-window renewal theorem for non-arithmetic distributions, including the infinite-mean case, gives
\[
  U(t)-U(t-T)\longrightarrow0,
  \qquad t\to\infty;
\]
see \cite[Eq.~(1.1), p.~88]{Thorisson1987}. Consequently,
\[
  \limsup_{t\to\infty}
  \left|\int_{[0,t]}h(t-s)\,U(ds)\right|
  \le\varepsilon.
\]
Letting \(\varepsilon\downarrow0\) proves \eqref{eq:renewal-negligibility-X}.

Finally, if \(\kappa>\gamma\), then
\[
  \frac{(1+t)^{-\kappa}}{\overline F(t)}
  \sim c^{-1}t^{\gamma-\kappa}
  \longrightarrow0.
\]
Thus \(h(t):=(1+t)^{-\kappa}\) satisfies \eqref{eq:renewal-small-relative-tail}, and \eqref{eq:renewal-power-negligibility-X} follows from \eqref{eq:renewal-negligibility-X}.
\end{proof}

The next lemma is the precise estimate needed in the proof of the main theorem.

\begin{lemma}\label{lem:F1-stieltjes}
Assume \eqref{eq:F1-heavy-tail}. Let \(\kappa>\gamma\), and let \(h:[0,\infty)\to[0,\infty)\) be a Borel function such that, for some \(C_h>0\),
\begin{equation}\label{eq:h-kappa-bound}
  h(t)\le C_h(1+t)^{-\kappa},
  \qquad t\ge0.
\end{equation}
Then
\begin{equation}\label{eq:F1-conv-estimate}
  \int_{[0,t]}h(t-s)\,dF_1(s)
  =o\bigl(\overline F_1(t)\bigr),
  \qquad t\to\infty.
\end{equation}
\end{lemma}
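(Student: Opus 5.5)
We need to show \(\int_{[0,t]}h(t-s)\,dF_1(s)=o(t^{-\gamma})\) when \(h(u)\le C_h(1+u)^{-\kappa}\) with \(\kappa>\gamma\). The plan is to split the integral at \(s=t/2\) and exploit two different sources of smallness. On the region where \(t-s\ge t/2\), the factor \(h(t-s)\) is already small. On the region where \(s>t/2\), the measure \(dF_1\) puts mass only \(\overline F_1(t/2)\asymp t^{-\gamma}\) there, and we must gain an extra factor tending to zero.

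\emph{First piece.} For \(s\in[0,t/2]\) we have
\[
h(t-s)\le C_h(1+t/2)^{-\kappa}\le C\,t^{-\kappa}.
\]
Hence
\[
\int_{[0,t/2]}h(t-s)\,dF_1(s)\le C t^{-\kappa}=o(t^{-\gamma}),
\]
since \(\kappa>\gamma\).

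\emph{Second piece.} If \(\kappa\) is large this would be trivial, but in general we need more care. Fix a large \(T>0\) and split \((t/2,t]\) into \((t/2,t-T]\) and \((t-T,t]\).

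On \((t-T,t]\), bound \(h\le C_h\). This piece is then at most
\[
C_h\bigl(\overline F_1(t-T)-\overline F_1(t)\bigr).
\]
Because \(\overline F_1\) is regularly varying, \(\overline F_1(t-T)/\overline F_1(t)\to1\) for fixed \(T\). So this piece is \(o(\overline F_1(t))\) for every fixed \(T\).

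On \((t/2,t-T]\), the plan is to use only that the density-free measure \(dF_1\) restricted to \([t/2,t]\) has mass \(\overline F_1(t/2)-\overline F_1(t)\le C t^{-\gamma}\), while \(h(t-s)\le C_h(1+T)^{-\kappa}\) there. This gives the bound
\[
C\,(1+T)^{-\kappa}\,t^{-\gamma}.
\]

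Combining the three pieces,
\[
\limsup_{t\to\infty}\frac{1}{\overline F_1(t)}\int_{[0,t]}h(t-s)\,dF_1(s)\le C'(1+T)^{-\kappa},
\]
where \(C'\) depends only on \(c_1\), \(\gamma\) and \(C_h\). Indeed \(\overline F_1(t/2)/\overline F_1(t)\to2^\gamma\), so \(C'\) can be taken as \(C_h2^\gamma\). Letting \(T\to\infty\) yields \eqref{eq:F1-conv-estimate}.

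The main obstacle is the middle region \(s\in(t/2,t-T]\), where neither factor is individually small enough. A finer dyadic decomposition might seem necessary there, but the crude bound above already suffices. It uses only \(\overline F_1(t/2)-\overline F_1(t)=O(\overline F_1(t))\) together with the uniform smallness \((1+T)^{-\kappa}\), so no local (density-type) regularity of \(F_1\) is needed. Only the regular variation \eqref{eq:F1-heavy-tail} enters, through the ratios \(\overline F_1(t-T)/\overline F_1(t)\to1\) and \(\overline F_1(t/2)/\overline F_1(t)\to2^\gamma\).
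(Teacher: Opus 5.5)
Your proof is correct, but it uses a different decomposition from the paper's.

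\textbf{The paper's route.} The paper makes a single moving cut. It fixes $\theta\in(\gamma/\kappa,1)$ and splits at $t-t^{\theta}$:
\begin{itemize}
\item On $[0,t-t^{\theta}]$ one has $h(t-s)\le C_h(1+t^{\theta})^{-\kappa}=O(t^{-\theta\kappa})=o(t^{-\gamma})$.
\item On $(t-t^{\theta},t]$, the bound $h\le C_h$ and $t^{\theta}=o(t)$ give $F_1$-mass $\overline F_1(t-t^{\theta})-\overline F_1(t)=o(\overline F_1(t))$.
\end{itemize}
This is a direct two-piece $o(\cdot)$ estimate with no auxiliary parameter. The cost is that the window width must be tuned to the decay rate $\kappa$. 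It also needs $\overline F_1(t-a_t)\sim\overline F_1(t)$ along a growing shift $a_t=o(t)$.

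\textbf{Your route.} You combine a macroscopic cut at $t/2$ with a fixed window $T$, and conclude by taking a $\limsup$ and then letting $T\to\infty$. This costs a third piece and an extra limit. In exchange, it separates the roles of the hypotheses:
\begin{itemize}
\item $\kappa>\gamma$ is used only on $[0,t/2]$.
\item The middle piece uses only $\sup_{u\ge T}h(u)\to0$ and $\overline F_1(t/2)=O(\overline F_1(t))$.
\item The near piece uses only boundedness of $h$ and the fixed-shift property $\overline F_1(t-T)\sim\overline F_1(t)$.
\end{itemize}

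\textbf{What your route buys.} Since $\overline F_1$ is nonincreasing, $\sup_{u\ge t/2}h(u)\le\varepsilon\,\overline F_1(t/2)\le C\varepsilon\,\overline F_1(t)$ once $h\le\varepsilon\overline F_1$ on $[t/2,\infty)$. Hence your proof shows verbatim that $\int_{[0,t]}h(t-s)\,dF_1(s)=o(\overline F_1(t))$ for every bounded nonnegative $h$ with $h(u)=o(\overline F_1(u))$. That is exactly condition (H) used later in the renewal theorem.
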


\begin{proof}
Choose \(\theta\) such that
\[
  \frac{\gamma}{\kappa}<\theta<1,
\]
and set \(a_t:=t^\theta\). For all sufficiently large \(t\), write
\[
  I(t):=\int_{[0,t]}h(t-s)\,dF_1(s)=I_1(t)+I_2(t),
\]
where
\[
  I_1(t):=\int_{[0,t-a_t]}h(t-s)\,dF_1(s),
  \qquad
  I_2(t):=\int_{(t-a_t,t]}h(t-s)\,dF_1(s).
\]
On \([0,t-a_t]\), one has \(t-s\ge a_t\), and therefore
\[
  I_1(t)
  \le C_h(1+a_t)^{-\kappa}
  =O(t^{-\theta\kappa})
  =o(t^{-\gamma})
  =o\bigl(\overline F_1(t)\bigr).
\]
Moreover, \eqref{eq:h-kappa-bound} implies \(h\le C_h\), so
\[
\begin{split}
  I_2(t)
  &\le C_h\bigl(F_1(t)-F_1(t-a_t)\bigr)\\
  &=C_h\bigl(\overline F_1(t-a_t)-\overline F_1(t)\bigr).
\end{split}
\]
Since \(a_t/t=t^{\theta-1}\to0\), \eqref{eq:F1-heavy-tail} gives
\[
\begin{split}
  \frac{\overline F_1(t-a_t)}{\overline F_1(t)}
  &=
  \frac{\overline F_1(t-a_t)}{c_1(t-a_t)^{-\gamma}}
  \left(\frac{t}{t-a_t}\right)^\gamma
  \frac{c_1t^{-\gamma}}{\overline F_1(t)}
  \longrightarrow1.
\end{split}
\]
Hence
\[
  \overline F_1(t-a_t)-\overline F_1(t)
  =o\bigl(\overline F_1(t)\bigr),
\]
and consequently
\[
  I_2(t)=o\bigl(\overline F_1(t)\bigr).
\]
Combining the estimates for \(I_1(t)\) and \(I_2(t)\) proves \eqref{eq:F1-conv-estimate}.
\end{proof}

We also need a simpler convolution estimate for the finite-mean types.

\begin{lemma}\label{lem:finite-tail-stieltjes}
Let \(F\) be a distribution function on \([0,\infty)\) such that, for some \(C_F>0\) and \(\eta>\gamma\),
\[
  1-F(t)\le C_F(1+t)^{-\eta},
  \qquad t\ge0.
\]
Let \(h:[0,\infty)\to[0,\infty)\) be a Borel function such that, for some \(C_h>0\) and \(\kappa>\gamma\),
\[
  h(t)\le C_h(1+t)^{-\kappa},
  \qquad t\ge0.
\]
Set
\[
  \kappa':=\min\{\kappa,\eta\}.
\]
Then \(\kappa'>\gamma\), and there exists \(C'>0\) such that
\begin{equation}\label{eq:finite-tail-conv}
  \int_{[0,t]}h(t-s)\,dF(s)
  \le C'(1+t)^{-\kappa'},
  \qquad t\ge0.
\end{equation}
\end{lemma}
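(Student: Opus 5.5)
The inequality \(\kappa'>\gamma\) is immediate, since \(\kappa>\gamma\) and \(\eta>\gamma\). For the bound \eqref{eq:finite-tail-conv} I would split the convolution at the midpoint \(s=t/2\), in the same spirit as the proof of Lemma~\ref{lem:F1-stieltjes}. Here the polynomial tail bound on \(F\) makes a fixed proportional split sufficient. For \(t\ge0\) write
\[
  \int_{[0,t]}h(t-s)\,dF(s)=J_1(t)+J_2(t),
  \qquad
  J_1(t):=\int_{[0,t/2]}h(t-s)\,dF(s),
  \qquad
  J_2(t):=\int_{(t/2,t]}h(t-s)\,dF(s).
\]

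For \(J_1\), note that \(s\le t/2\) gives \(t-s\ge t/2\). Hence \(h(t-s)\le C_h(1+t/2)^{-\kappa}\le 2^{\kappa}C_h(1+t)^{-\kappa}\). Since \(F\) is a probability distribution function, its total mass is at most \(1\), so
\[
  J_1(t)\le 2^{\kappa}C_h(1+t)^{-\kappa}.
\]

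For \(J_2\), use only the uniform bound \(h\le C_h\), which follows from the hypothesis on \(h\). The \(dF\)-mass of \((t/2,t]\) is at most \(1-F(t/2)\le C_F(1+t/2)^{-\eta}\le 2^{\eta}C_F(1+t)^{-\eta}\). This gives
\[
  J_2(t)\le 2^{\eta}C_FC_h(1+t)^{-\eta}.
\]

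Adding the two bounds and using \((1+t)^{-\kappa}+(1+t)^{-\eta}\le 2(1+t)^{-\kappa'}\), valid for \(t\ge0\), yields \eqref{eq:finite-tail-conv} with \(C':=2\max\{2^{\kappa}C_h,\,2^{\eta}C_FC_h\}\). The bound holds uniformly for all \(t\ge0\), so small values of \(t\) need no separate treatment.

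There is no real obstacle in this argument. The only point to check is measurability: \(s\mapsto h(t-s)\) is Borel on \([0,t]\), so the Stieltjes integrals are well defined as integrals of nonnegative functions, possibly infinite a priori. The bounds above then show they are finite.
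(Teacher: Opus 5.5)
Your proposal is correct and is essentially the paper's proof. The paper also splits at \(t/2\), phrased through a random variable \(T\sim F\) and the events \(\{T\le t/2\}\), \(\{t/2<T\le t\}\), obtains the same bounds \(C_h2^\kappa(1+t)^{-\kappa}\) and \(C_hC_F2^\eta(1+t)^{-\eta}\), and combines them with the slightly tighter constant \(C'=C_h(2^\kappa+C_F2^\eta)\).
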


\begin{proof}
Let \(T\) be a random variable with distribution function \(F\). For \(t\ge0\), write
\[
\begin{split}
  \int_{[0,t]}h(t-s)\,dF(s)
  &=
  \E\left[h(t-T)\ind{T\le t}\right]=
  \E\left[h(t-T)\ind{T\le t/2}\right]
  +
  \E\left[h(t-T)\ind{t/2<T\le t}\right].
\end{split}
\]
On the event \(\{T\le t/2\}\),
\[
  1+t-T\ge\frac{1+t}{2},
\]
and therefore
\[
  \E\left[h(t-T)\ind{T\le t/2}\right]
  \le C_h2^\kappa(1+t)^{-\kappa}.
\]
Moreover, \(h\le C_h\), and hence
\[
\begin{split}
  \E&\left[h(t-T)\ind{t/2<T\le t}\right]
  \le C_h\Pbb\{T>t/2\}=C_h\bigl(1-F(t/2)\bigr)\le C_hC_F(1+t/2)^{-\eta}\le C_hC_F2^\eta(1+t)^{-\eta}.
\end{split}
\]
Combining these estimates gives
\[
\begin{split}
  \int_{[0,t]}h(t-s)\,dF(s)
  &\le
  C_h2^\kappa(1+t)^{-\kappa}
  +C_hC_F2^\eta(1+t)^{-\eta}\le
  C_h\bigl(2^\kappa+C_F2^\eta\bigr)
  (1+t)^{-\kappa'}.
\end{split}
\]
Thus \eqref{eq:finite-tail-conv} holds with
\[
  C':=C_h\bigl(2^\kappa+C_F2^\eta\bigr).
\]
Since \(\kappa>\gamma\) and \(\eta>\gamma\), one has \(\kappa'>\gamma\).
\end{proof}

\subsection{A mixed finite--infinite mean renewal theorem}
\label{sec:renewal}

The following theorem is the renewal result used in the proof of Theorem~\ref{thm:main-mixed}. We use the Markov--renewal notation introduced in Section~\ref{sec:mixed-renewal-framework}. Its proof uses Blackwell's fixed-window renewal theorem for non-arithmetic renewal processes with infinite mean, in the form given in \cite[Eq.~(1.1), p.~88]{Thorisson1987}.

\begin{theorem}\label{thm:mixed-renewal}
Assume that \(M\) is irreducible and stochastic, with \(m_{1,1}>0\), that \(F_i(0)=0\) for every \(i\), that \(F_1\) is non-arithmetic and satisfies \eqref{eq:F1-heavy-tail}, and that
\[
  \int_0^\infty\overline F_i(t)\,dt<\infty,
  \qquad i=2,\dots,K.
\]
Let \(z_1,\dots,z_K\) be real-valued Borel measurable functions, locally bounded on \([0,\infty)\), and extended by zero to \((-\infty,0)\). Assume that there exists \(a\in\R\) such that
\begin{equation}\label{eq:z1-decomp-renewal}
  z_1(t)=a\overline F_1(t)+r_1(t),
  \qquad t\ge0.
\end{equation}
Assume that each function among \(r_1,z_2,\dots,z_K\) satisfies
\begin{equation*}\tag{H}
  |h(t)|=o\bigl(\overline F_1(t)\bigr),
  \qquad t\to\infty.
\end{equation*}
where \(h\) denotes the function under consideration.
Then the renewal system
\begin{equation}\label{eq:mixed-renewal-system-final}
  H_i(t)
  =
  z_i(t)
  +
  \sum_{j=1}^K
  \int_{[0,t]}H_j(t-s)\,dF_{i,j}(s),
  \qquad i=1,\dots,K,
\end{equation}
has a unique locally bounded Borel measurable solution \(H=(H_1,\dots,H_K)\). Moreover,
\begin{equation}\label{eq:mixed-renewal-limit-final}
  \lim_{t\to\infty}H_i(t)=a,
  \qquad i=1,\dots,K.
\end{equation}
\end{theorem}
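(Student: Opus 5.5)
Existence and uniqueness of a locally bounded solution come from standard Markov--renewal theory. Since \(F_i(0)=0\) for every \(i\), the matrix convolution powers \(F^{*n}\) satisfy \(\sum_j F^{*n}_{i,j}(t)=\Pbb_i\{S_n\le t\}\). By the Markov property and \(F_i(0)=0\), this probability is summable in \(n\), uniformly on compact \(t\)-intervals. We do not need exponential decay: it suffices that on \([0,T]\) there are \(\varepsilon,p\) with \(\Pbb_i\{\tau_0+\dots+\tau_{p-1}\le\varepsilon\}\le q<1\) for every \(i\). Consequently the Markov renewal matrix \(U^F:=\sum_{n\ge0}F^{*n}\) is locally finite, and
\[
  H_i(t)=\sum_j\int_{[0,t]}z_j(t-s)\,U^F_{i,j}(ds)
\]
is a locally bounded solution. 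If \(D\) is the difference of two locally bounded solutions, then \(D=F^{*n}*D\) for every \(n\), so \(|D_i(t)|\le \sup_{[0,t]}\|D\|_\infty\,\Pbb_i\{S_n\le t\}\to0\). This gives uniqueness.

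For the limit we reduce to scalar renewal at returns to type \(1\), as announced in the introduction. For a general starting type \(i\), apply the Markov property at \(\sigma_1\) (for \(i=1\), at the first return time) and write
\[
  H_i(t)=\zeta_i(t)+\E_i\bigl[H_1(t-S_{\sigma_1})\ind{S_{\sigma_1}\le t}\bigr],
  \qquad
  \zeta_i(t):=\E_i\sum_{n<\sigma_1} z_{Y_n}(t-S_n).
\]
Specializing to \(i=1\) gives the scalar renewal equation \(H_1=\zeta_1+F_C*H_1\), so \(H_1=U_C*\zeta_1\). Here \(\zeta_1(t)=z_1(t)+\E_1\sum_{1\le n<\sigma_1}z_{Y_n}(t-S_n)\), and the types \(Y_n\) in the second sum are all different from \(1\).

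The key step is to show that \(\zeta_1(t)=a\overline F_C(t)+o(\overline F_C(t))\) and \(\overline F_C(t)\sim\overline F_1(t)\). Then Lemma~\ref{lem:infinite-mean-renewal-negligibility}, applied to \(X=C\), combined with the renewal identity \(U_C*\overline F_C\equiv1\), gives \(H_1(t)\to a\). The law \(F_C\) is non-arithmetic because \(m_{1,1}>0\): with positive probability \(C=\tau_0\sim F_1\), so \(F_C\) has a non-arithmetic component. For the tail, write \(C=\tau_0+D\), where \(D=\sum_{1\le n<\sigma_1}\tau_n\) is a sum of finite-mean type-\(j\ge2\) lifetimes over a geometric-tailed number of steps, and \(D\) is independent of \(\tau_0\). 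We need \(\Pbb\{D>t\}=o(t^{-\gamma})\); this holds because \(\E D<\infty\) by Wald's identity (the excursion length has finite mean, by irreducibility), so Markov's inequality gives \(\Pbb\{D>t\}=O(1/t)\). A standard argument for sums of independent variables, with one regularly varying summand and the other of lighter tail, then yields \(\Pbb\{\tau_0+D>t\}\sim\overline F_1(t)\).

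For the relative-\(o\) terms: the \(r_1\) part of \(z_1\) is \(o(\overline F_1)\) by hypothesis. For each \(n\ge1\) we use the bound \(|z_{Y_n}(t-S_n)|\le \varepsilon\overline F_1(t-S_n)+M_T\ind{t-S_n\le T}\). Summing over the excursion, the first contribution is controlled by splitting according to \(S_n\le t/2\) versus \(S_n>t/2\). On \(\{S_n>t/2\}\) we have \(S_n\ge\tau_0>t/2\) or \(D>t/2\), which costs \(O(\overline F_1(t/2))\cdot\E\sigma_1\)-type quantities. This is where the main obstacle lies. The expected number of excursion steps landing in a window \((t-T,t]\) must be \(o(\overline F_1(t))\), and this is not automatic, because \(\tau_0\) is heavy-tailed and all later times are shifted by \(\tau_0\).

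I would handle it by conditioning on \(\tau_0\). The count of excursion renewals of \(D\)-partial sums in a window is bounded by a constant times \(\Pbb\{\text{excursion reaches window}\}\). The event requires \(\tau_0+D>t-T\), which has probability \(O(\overline F_1(t))\); on it, \(\tau_0\) must land in a set where Blackwell-type local smallness of \(F_1\) applies. Concretely, I would show that \(\Pbb\{\tau_0\in(t-T-d,t-d]\}=\overline F_1(t-T-d)-\overline F_1(t-d)=o(\overline F_1(t))\) uniformly in \(d\le t^\theta\), exactly as in Lemma~\ref{lem:F1-stieltjes}. The case \(d>t^\theta\) costs \(\Pbb\{D>t^\theta\}\), and this is \(o(t^{-\gamma})\) provided \(\E D^{p}<\infty\) for some \(p>\gamma/\theta\). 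That in turn needs a moment of \(D\) slightly above \(\gamma\); this is available from \(\E D<\infty\) by taking \(\theta\) close to \(1\), since \(\gamma<1\).

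Finally, for \(i\ne1\), \(H_i(t)=\zeta_i(t)+\E_i[H_1(t-S_{\sigma_1})]\). Here \(S_{\sigma_1}\) is a.s. finite and contains only finite-mean lifetimes, so \(\zeta_i\to0\) by the same estimates, and by dominated convergence, using boundedness of \(H_1\), we get \(H_i(t)\to a\).
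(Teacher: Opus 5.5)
Your proof is correct in outline. Its skeleton is the same as the paper's:
\begin{itemize}
\item local finiteness of the Markov renewal measure gives existence and uniqueness (you use a uniform small-ball bound on blocks of holding times, where the paper uses $\E_i e^{-\theta S_n}\le q_\theta^n$);
\item reduction to the return-cycle renewal measure $U_C$;
\item $\overline F_C\sim\overline F_1$ from $\Pbb_1\{D>t\}=O(t^{-1})$;
\item non-arithmeticity of $F_C$ from $m_{1,1}>0$;
\item Lemma~\ref{lem:infinite-mean-renewal-negligibility} together with $U_C*\overline F_C\equiv1$;
\item first passage plus dominated convergence for $i\ne1$.
\end{itemize}

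The real difference is how you handle the excursion part $\zeta_1-z_1=\sum_{j\ge2}\ell_j*z_j$, where $\ell_j$ is the expected occupation measure of type $j$ during the first cycle. You prove the relative estimate $\ell_j*z_j=o(\overline F_1)$. That is what forces the window-count argument you call the main obstacle.

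The paper never faces this obstacle. After writing $H_1=U_C*q$ with $q=a\overline F_1+r_1+\sum_{j\ge2}\ell_j*z_j$, it commutes the convolutions: $U_C*(\ell_j*z_j)=\ell_j*(U_C*z_j)$. Since $z_j=o(\overline F_C)$, Lemma~\ref{lem:infinite-mean-renewal-negligibility} gives $U_C*z_j\to0$. This function is bounded and $\ell_j$ is a finite measure, so dominated convergence finishes. Only $\ell_j([0,\infty))\le\E_1\sigma_1$ is used, and no local estimate on $F_1$ is needed.

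Your route costs that extra local estimate. In exchange it yields something the paper does not need: the scalar input $\zeta_1$ is itself $a\overline F_C+o(\overline F_C)$, so the reduced equation satisfies exactly the hypotheses of the original one.

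If you keep your route, two details need tightening.
\begin{itemize}
\item During the excursion you only have $S_n\le\tau_0+D$. So $\{S_n>t/2\}$ gives $\{\tau_0>t/4\}\cup\{D>t/4\}$, not the $t/2$ version you wrote. The number of such steps then contributes $\E_1[\sigma_1\ind{D>t/4}]$, which needs a little more than Markov's inequality for $D$. For example, $\E_1[\sigma_1D]\le\mu_*\E_1\sigma_1^2<\infty$ with $\mu_*:=\max_{j\ge2}\int_0^\infty\overline F_j(s)\,ds$, using the geometric tail of $\sigma_1$.
\item In the window step, one option is the strong Markov property at the first entrance into $(t-T,t]$. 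Then the expected count is at most a constant times the probability of the reach event $\{t-T-D<\tau_0\le t\}$, an interval of length $T+D$, not $T$. The other option is to keep the sum over individual epochs and carry the factor $\sigma_1$ into the case where the relevant partial sum of $D$ exceeds $t^\theta$.
\end{itemize}
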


\begin{proof}
We first establish non-explosion and the local finiteness estimate needed below. Fix \(\theta>0\), and set
\[
  \phi_j(\theta):=
  \int_{[0,\infty)}e^{-\theta s}\,dF_j(s),
  \qquad
  q_\theta:=\max_{1\le j\le K}\phi_j(\theta).
\]
Since \(F_j(0)=0\) for every \(j\) and the type space is finite,
\[
  q_\theta<1.
\]
The Markov--renewal construction and the independence of the lifetime variables give
\[
  \E_i\left(e^{-\theta S_n}\right)
  =
  \E_i\left[
    \prod_{r=0}^{n-1}\phi_{Y_r}(\theta)
  \right]
  \le q_\theta^n.
\]
Since \((S_n)_{n\ge0}\) is nondecreasing, the extended limit
\[
  S_\infty:=\lim_{n\to\infty}S_n
  \in[0,\infty]
\]
exists almost surely. With the convention \(e^{-\theta\infty}=0\),
\[
  e^{-\theta S_n}\longrightarrow e^{-\theta S_\infty}
  \qquad\text{almost surely}.
\]
Since \(0\le e^{-\theta S_n}\le1\), the dominated convergence theorem gives
\[
  \E_i\left(e^{-\theta S_\infty}\right)
  =
  \lim_{n\to\infty}\E_i\left(e^{-\theta S_n}\right)
  =0.
\]
Therefore
\[
  e^{-\theta S_\infty}=0
  \qquad \Pbb_i\text{-almost surely},
\]
which is equivalent to
\[
  S_n\longrightarrow\infty
  \qquad \Pbb_i\text{-almost surely}.
\]
For \(i=1,\dots,K\), define
\[
  \widehat H_i(t):=
  \E_i\left[
    \sum_{n\ge0}
    z_{Y_n}(t-S_n)\ind{S_n\le t}
  \right],
  \qquad t\ge0.
\]
With \(\theta>0\) and \(q_\theta<1\) fixed as above, let \(T>0\) and set
\[
  C_T:=
  \max_{1\le j\le K}\sup_{0\le u\le T}|z_j(u)|.
\]
Since
\[
  \ind{S_n\le t}
  \le e^{\theta(t-S_n)}
  \le e^{\theta T}e^{-\theta S_n},
  \qquad 0\le t\le T,
\]
we have
\[
\begin{split}
  \E_i\left[
    \sum_{n\ge0}
    |z_{Y_n}(t-S_n)|\ind{S_n\le t}
  \right]
  &\le
  C_Te^{\theta T}
  \sum_{n\ge0}\E_i\left(e^{-\theta S_n}\right)\\
  &\le
  C_Te^{\theta T}\sum_{n\ge0}q_\theta^n
  =
  \frac{C_Te^{\theta T}}{1-q_\theta}
  <\infty.
\end{split}
\]
Thus the series defining \(\widehat H_i(t)\) is absolutely integrable. Since the corresponding expected partial sums are Borel measurable, \(\widehat H_i\) is Borel measurable, and the preceding bound shows that it is locally bounded. Conditioning on the first holding time and the first transition gives
\[
\begin{split}
  \widehat H_i(t)
  &=
  z_i(t)
  +
  \sum_{j=1}^K m_{i,j}\int_{[0,t]}
  \E_j\left[
    \sum_{n\ge0}
    z_{Y_n}(t-s-S_n)\ind{S_n\le t-s}
  \right]dF_i(s)\\
  &=
  z_i(t)
  +
  \sum_{j=1}^K
  \int_{[0,t]}\widehat H_j(t-s)\,dF_{i,j}(s).
\end{split}
\]
Hence \(\widehat H=(\widehat H_1,\dots,\widehat H_K)\) is a locally bounded Borel measurable solution of \eqref{eq:mixed-renewal-system-final}.

To prove uniqueness, let \(H\) and \(\widetilde H\) be two locally bounded Borel measurable solutions of \eqref{eq:mixed-renewal-system-final}, and set
\[
  D_i(t):=H_i(t)-\widetilde H_i(t),
  \qquad i=1,\dots,K.
\]
Then \(D=(D_1,\dots,D_K)\) satisfies the homogeneous system
\[
  D_i(t)
  =
  \sum_{j=1}^K
  \int_{[0,t]}D_j(t-s)\,dF_{i,j}(s).
\]
Since \(S_1=\tau_0\) and
\[
  \Pbb_i\{Y_1=j,\ S_1\in ds\}
  =
  m_{i,j}\,dF_i(s)
  =
  dF_{i,j}(s),
\]
the preceding equation is equivalently
\[
  D_i(t)
  =
  \E_i\left[
    D_{Y_1}(t-S_1)\ind{S_1\le t}
  \right].
\]
Repeated application of this identity, using the Markov--renewal property after each transition, gives, for every \(n\ge1\),
\[
  D_i(t)
  =
  \E_i\left[
    D_{Y_n}(t-S_n)\ind{S_n\le t}
  \right].
\]
Fix \(T>0\) and set
\[
  M_T:=
  \max_{1\le j\le K}\sup_{0\le u\le T}|D_j(u)|<\infty.
\]
For \(0\le t\le T\),
\[
  |D_i(t)|
  \le
  M_T\Pbb_i\{S_n\le t\}
  \le
  M_T\Pbb_i\{S_n\le T\}.
\]
Since \(S_n\to\infty\) almost surely,
\[
  \Pbb_i\{S_n\le T\}\longrightarrow0,
  \qquad n\to\infty.
\]
Letting \(n\to\infty\) in the preceding bound yields \(D_i(t)=0\). Since \(T>0\) is arbitrary, \(D_i(t)=0\) for every \(t\ge0\) and every \(i\in\Kk\). Consequently, \(\widehat H\) is the unique locally bounded Borel measurable solution of \eqref{eq:mixed-renewal-system-final}. Denoting this solution by \(H\), we obtain
\begin{equation}\label{eq:renewal-representation-final}
  H_i(t)
  =
  \E_i\left[
    \sum_{n\ge0}
    z_{Y_n}(t-S_n)\ind{S_n\le t}
  \right],
  \qquad t\ge0,\quad i=1,\dots,K.
\end{equation}

We next analyse the return cycles to type \(1\). Since \(M\) is irreducible on the finite state space \(\Kk\) and \(m_{1,1}>0\), there exist \(r\in\N\) and \(p_0>0\) such that
\[
  \inf_{i\in\Kk}\Pbb_i\{\sigma_1\le r\}\ge p_0.
\]
By the Markov property,

\begin{equation}\label{eq:return-exp-tail-final}
  \Pbb_i\{\sigma_1>nr\}\le(1-p_0)^n,\qquad n\ge0,\quad i\in\Kk.
\end{equation}
Consequently,
\[
  \Pbb_i\{\sigma_1=\infty\}=\lim_{n\to\infty}\Pbb_i\{\sigma_1>nr\}=0,\qquad \E_i(\sigma_1)=\sum_{m\ge0}\Pbb_i\{\sigma_1>m\}\le r\sum_{n\ge0}\Pbb_i\{\sigma_1>nr\}\le\frac{r}{p_0}<\infty.
\]

Let
\[
  \overline F_C(t):=\Pbb_1\{C>t\}.
\]
We claim that
\begin{equation}\label{eq:cycle-tail-final}
  \overline F_C(t)
  \sim\overline F_1(t)
  \sim c_1t^{-\gamma},
  \qquad t\to\infty.
\end{equation}
Write
\[
  C=\tau_0+E,
  \qquad
  E:=\sum_{n=1}^{\sigma_1-1}\tau_n,
\]
with the convention that \(E=0\) when \(\sigma_1=1\). Before the return to type \(1\), the chain visits only types in \(\{2,\dots,K\}\). Set
\[
  \mu_i:=\int_0^\infty\overline F_i(s)\,ds,\quad i=2,\dots,K,
  \qquad
  \mu_*:=\max_{2\le i\le K}\mu_i<\infty.
\]
Since \(M\) is stochastic, it is the transition matrix of the embedded type chain \((Y_n)_{n\ge0}\). By construction, conditionally on this chain, \(\tau_n\) has distribution \(F_{Y_n}\). Moreover, \(Y_n\ne1\) for \(1\le n<\sigma_1\). Therefore, by conditional expectation and Tonelli's theorem,
\[
  \E_1(E)
  =
  \E_1\left[\sum_{n=1}^{\sigma_1-1}\tau_{n}\right]
  \le
  \mu_*\E_1(\sigma_1-1)
  <\infty.
\]
On the event \(\{E>t\}\), one has \(t\le E\), and therefore
\[
  t\Pbb_1\{E>t\}
  =
  \E_1\bigl(t\ind{E>t}\bigr)
  \le
  \E_1\bigl(E\ind{E>t}\bigr).
\]
Moreover, \(\ind{E>t}\to0\) almost surely and
\[
  0\le E\ind{E>t}\le E.
\]
Since \(\E_1(E)<\infty\), dominated convergence yields
\[
  \E_1\bigl(E\ind{E>t}\bigr)\longrightarrow0.
\]
Consequently,
\[
  t\Pbb_1\{E>t\}\longrightarrow0,
\]
or equivalently, \(\Pbb_1\{E>t\}=o(t^{-1})\). Since \(0<\gamma<1\) and \(\overline F_1(t)\sim c_1t^{-\gamma}\), we have \(t^{-1}=o(\overline F_1(t))\), and therefore
\[
  \Pbb_1\{E>t\}
  =
  o\bigl(\overline F_1(t)\bigr).
\]

Since \(E\ge0\),
\[
  \overline F_C(t)\ge\overline F_1(t).
\]
For \(\delta\in(0,1)\),
\[
  \{C>t\}
  \subseteq
  \{\tau_0>(1-\delta)t\}\cup\{E>\delta t\},
\]
and therefore
\[
  \overline F_C(t)
  \le
  \overline F_1((1-\delta)t)+\Pbb_1\{E>\delta t\}.
\]
Dividing by \(\overline F_1(t)\) gives
\[
  \limsup_{t\to\infty}
  \frac{\overline F_C(t)}{\overline F_1(t)}
  \le(1-\delta)^{-\gamma}.
\]
Letting \(\delta\downarrow0\) proves \eqref{eq:cycle-tail-final}. In particular,
\[
  \E_1(C)=\infty.
\]

The distribution of \(C\) is non-arithmetic. Indeed, suppose that
\[
  \Pbb_1\{C\in h\N_0\}=1
\]
for some \(h>0\), and let \(\mathcal A:=\{Y_1=1\}\). Since \(m_{1,1}>0\),
\[
  \Pbb_1(\mathcal A)=m_{1,1}>0.
\]
On \(\mathcal A\), one has \(\sigma_1=1\) and \(C=\tau_0\). Since \(\tau_0\) is independent of \(\mathcal A\),
\[
\begin{split}
  0
  &=
  \Pbb_1\{C\notin h\N_0\}\ge
  \Pbb_1\bigl(\mathcal A,\tau_0\notin h\N_0\bigr)=
  \Pbb_1(\mathcal A)\Pbb_1\{\tau_0\notin h\N_0\}.
\end{split}
\]
It follows that \(F_1\) is arithmetic, contradicting the hypothesis.

Let \(h\) be a locally bounded Borel function on \([0,\infty)\), extended by zero to \((-\infty,0)\), and suppose that \(h\) satisfies \textup{(H)}. We claim that
\begin{equation}\label{eq:basic-negligible-final}
  (U_C*h)(t):=
  \int_{[0,t]}h(t-s)\,U_C(ds)
  \longrightarrow0,
  \qquad t\to\infty.
\end{equation}
Indeed, by condition \textup{(H)} and \eqref{eq:cycle-tail-final},
\[
  \frac{|h(t)|}{\overline F_C(t)}
  =
  \frac{|h(t)|}{\overline F_1(t)}
  \frac{\overline F_1(t)}{\overline F_C(t)}
  \longrightarrow0.
\]
Hence Lemma~\ref{lem:infinite-mean-renewal-negligibility}, applied to the return-cycle distribution \(F_C\), proves \eqref{eq:basic-negligible-final}. Moreover, \(U_C*h\) is bounded on \([0,\infty)\): it is bounded on compact intervals because \(U_C\) is finite on compact intervals and \(h\) is locally bounded, and it is bounded for large arguments because it converges to zero.

For the process started from type \(1\), define
\begin{equation}\label{eq:cycle-reward-final}
  q(t):=
  \E_1\left[
    \sum_{n=0}^{\sigma_1-1}
    z_{Y_n}(t-S_n)\ind{S_n\le t}
  \right].
\end{equation}

For \(n\ge0\), let
\[
  \mathcal G_n
  :=
  \sigma\bigl(
    Y_0,\ldots,Y_n,
    \tau_0,\ldots,\tau_{n-1}
  \bigr),
\]
with the list of holding times understood to be empty when \(n=0\), and
let
\[
  \mathcal G_\infty
  :=
  \sigma\left(\bigcup_{n\ge0}\mathcal G_n\right).
\]
Then each \(\sigma_k\) is a stopping time with respect to
\((\mathcal G_n)_{n\ge0}\). We write
\[
  \mathcal G_{\sigma_k}
  :=
  \left\{
    A\in\mathcal G_\infty:
    A\cap\{\sigma_k\le n\}\in\mathcal G_n
    \text{ for every }n\ge0
  \right\}
\]
for the corresponding stopping-time sigma-field.

Let
\[
  R_k:=S_{\sigma_k},
  \qquad k\ge0.
\]
Since \(Y_{\sigma_k}=1\), the strong Markov property at \(\sigma_k\)
implies that the increments \(R_{k+1}-R_k\) are independent and
identically distributed with law \(F_C\). Hence
\[
  U_C(\cdot)
  =
  \sum_{k\ge0}\Pbb_1\{R_k\in\cdot\}.
\]

For \(k\ge0\), define the reward accumulated during the \(k\)-th return
cycle by
\[
  Q_k(t)
  :=
  \sum_{n=\sigma_k}^{\sigma_{k+1}-1}
  z_{Y_n}(t-S_n)\ind{S_n\le t}.
\]
The cycle intervals partition the nonnegative integers, and the local
absolute-integrability estimate established above gives
\[
\begin{split}
  \sum_{k\ge0}\E_1|Q_k(t)|
  &\le
  \E_1\left[
    \sum_{n\ge0}
    |z_{Y_n}(t-S_n)|\ind{S_n\le t}
  \right]
  <\infty.
\end{split}
\]
Consequently,
\[
  H_1(t)=\sum_{k\ge0}\E_1[Q_k(t)].
\]

Set
\[
  \rho_k:=\sigma_{k+1}-\sigma_k,\qquad
  \widetilde Y_m^{(k)}:=Y_{\sigma_k+m},\qquad
  \widetilde S_m^{(k)}:=S_{\sigma_k+m}-R_k.
\]
Then
\[
\begin{split}
  Q_k(t)
  &=
  \ind{R_k\le t}
  \sum_{m=0}^{\rho_k-1}
  z_{\widetilde Y_m^{(k)}}
  \bigl((t-R_k)-\widetilde S_m^{(k)}\bigr)
  \ind{\widetilde S_m^{(k)}\le t-R_k}.
\end{split}
\]
Conditional on \(\mathcal G_{\sigma_k}\), the shifted sequence
\[
  \left(
    \widetilde Y_m^{(k)},
    \widetilde S_m^{(k)}
  \right)_{m\ge0}
\]
has the same law as the original Markov--renewal sequence under
\(\Pbb_1\). Since \(R_k\) is \(\mathcal G_{\sigma_k}\)-measurable, the
definition of \(q\) yields
\[
  \E_1\left[
    Q_k(t)\mid\mathcal G_{\sigma_k}
  \right]
  =
  q(t-R_k)\ind{R_k\le t}.
\]
Therefore,
\begin{equation}\label{eq:H1-cycle-final}
H_1(t)=
  \sum_{k\ge0}
  \E_1\left[
    q(t-R_k)\ind{R_k\le t}
  \right]=
  \sum_{k\ge0}
  \int_{[0,t]}q(t-s)\,\Pbb_1\{R_k\in ds\}=
  \int_{[0,t]}q(t-s)\,U_C(ds).
\end{equation}

The term corresponding to \(n=0\) in \eqref{eq:cycle-reward-final} is
\[
  z_1(t)=a\overline F_1(t)+r_1(t).
\]
For the remaining terms, the definition of \(\sigma_1\) as the first
return time to type \(1\) implies that
\[
  Y_n\in\{2,\dots,K\},
  \qquad 1\le n<\sigma_1.
\]
Accordingly, for \(j=2,\dots,K\), define the finite measure
\[
  \ell_j(B):=
  \E_1\left[
    \sum_{n=1}^{\sigma_1-1}
    \ind{Y_n=j,\ S_n\in B}
  \right]
\]
for every Borel set \(B\subset[0,\infty)\). Each \(\ell_j\) is finite,
since
\[
  \ell_j([0,\infty))
  \le
  \E_1(\sigma_1)
  <\infty.
\]

For every nonnegative Borel function \(g\), the definition of the mean
counting measure \(\ell_j\) and Tonelli's theorem give
\[
  \int_{[0,\infty)}g(s)\,\ell_j(ds)
  =
  \E_1\left[
    \sum_{n=1}^{\sigma_1-1}
    g(S_n)\ind{Y_n=j}
  \right].
\]
By applying this identity to the positive and negative parts, it also
holds for every Borel function \(g\) satisfying
\[
  \int_{[0,\infty)}|g(s)|\,\ell_j(ds)<\infty.
\]

For fixed \(t\ge0\), set
\[
  g_{j,t}(s)
  :=
  z_j(t-s)\ind{0\le s\le t}.
\]
Since \(z_j\) is locally bounded and \(\ell_j\) is finite,
\[
\begin{split}
  \int_{[0,\infty)}|g_{j,t}(s)|\,\ell_j(ds)
  &\le
  \sup_{0\le u\le t}|z_j(u)|\,
  \ell_j([0,t])
  <\infty.
\end{split}
\]
Consequently,
\[
\begin{split}
  &\E_1\left[
    \sum_{n=1}^{\sigma_1-1}
    z_j(t-S_n)\ind{Y_n=j,\ S_n\le t}
  \right]\\
  &\qquad=
  \int_{[0,t]}z_j(t-s)\,\ell_j(ds).
\end{split}
\]
Summing over \(j=2,\dots,K\), we obtain
\begin{equation}\label{eq:q-decomposition-final}
  q(t)
  =
  a\overline F_1(t)+r_1(t)
  +
  \sum_{j=2}^K
  \int_{[0,t]}z_j(t-s)\,\ell_j(ds).
\end{equation}
Substituting \eqref{eq:q-decomposition-final} into \eqref{eq:H1-cycle-final} and applying Fubini's theorem gives
\begin{equation}\label{eq:H1-conv-decomp-final}
\begin{split}
  H_1(t)
  &=
  a\int_{[0,t]}\overline F_1(t-s)\,U_C(ds)
  +(U_C*r_1)(t)+
  \sum_{j=2}^K
  \int_{[0,t]}(U_C*z_j)(t-s)\,\ell_j(ds).
\end{split}
\end{equation}
For fixed \(t\), the required absolute integrability follows from
\[
  \sup_{0\le u\le t}|z_j(u)|\,
  U_C([0,t])\,\ell_j([0,t])
  <\infty.
\]
By \eqref{eq:basic-negligible-final},
\[
  (U_C*r_1)(t)\longrightarrow0,
  \qquad
  (U_C*z_j)(t)\longrightarrow0.
\]
Each \(U_C*z_j\) is globally bounded. Hence, after extending it by zero to \((-\infty,0)\), dominated convergence with respect to the finite measure \(\ell_j\) yields
\[
  \int_{[0,t]}(U_C*z_j)(t-s)\,\ell_j(ds)
  \longrightarrow0.
\]
Therefore
\begin{equation}\label{eq:H1-main-reduced-final}
  H_1(t)
  =
  a\int_{[0,t]}\overline F_1(t-s)\,U_C(ds)+o(1).
\end{equation}

Set
\[
  d(t):=\overline F_1(t)-\overline F_C(t).
\]
By \eqref{eq:cycle-tail-final},
\[
  |d(t)|=o\bigl(\overline F_C(t)\bigr).
\]
Lemma~\ref{lem:infinite-mean-renewal-negligibility}, applied to \(C\) and \(d\), gives
\[
  (U_C*d)(t)\longrightarrow0.
\]
On the other hand, the elementary renewal identity \eqref{eq:renewal-tail-identity-general}, applied to \(C\), gives
\[
  \int_{[0,t]}\overline F_C(t-s)\,U_C(ds)=1.
\]
Consequently,
\begin{equation}\label{eq:F1-tail-renewal-final}
  \int_{[0,t]}\overline F_1(t-s)\,U_C(ds)
  =
  1+(U_C*d)(t)
  \longrightarrow1.
\end{equation}
Combining \eqref{eq:H1-main-reduced-final} and \eqref{eq:F1-tail-renewal-final}, we obtain
\[
  H_1(t)\longrightarrow a.
\]

Finally, fix \(i\in\{2,\dots,K\}\), and under \(\Pbb_i\) set
\[
  T_i:=S_{\sigma_1}.
\]
By \eqref{eq:return-exp-tail-final},
\[
  \Pbb_i\{\sigma_1<\infty\}=1,
  \qquad
  \E_i(\sigma_1)<\infty,
\]
and hence \(T_i<\infty\) almost surely. Define
\[
  b_i(t):=
  \E_i\left[
    \sum_{n=0}^{\sigma_1-1}
    z_{Y_n}(t-S_n)\ind{S_n\le t}
  \right].
\]
For \(j=2,\dots,K\), define
\[
  \nu_{i,j}(B):=
  \E_i\left[
    \sum_{n=0}^{\sigma_1-1}
    \ind{Y_n=j,\ S_n\in B}
  \right]
\]
for every Borel set \(B\subset[0,\infty)\). Then
\(
  \nu_{i,j}([0,\infty))
  \le
  \E_i(\sigma_1)
  <\infty
\)
and
\[
  b_i(t)
  =
  \sum_{j=2}^K
  \int_{[0,t]}z_j(t-s)\,\nu_{i,j}(ds).
\]
By condition \textup{(H)} and \(\overline F_1(t)\to0\), each \(z_j\), \(j\ge2\), tends to zero. Since each \(z_j\) is locally bounded, it is therefore bounded on \([0,\infty)\). Dominated convergence gives
\begin{equation}\label{eq:prehit-zero-final}
  b_i(t)\longrightarrow0.
\end{equation}

Extend \(H_1\) by zero to \((-\infty,0)\). Splitting \eqref{eq:renewal-representation-final} at the Markov-chain stopping time \(\sigma_1\) and using the strong Markov property of the Markov--renewal sequence gives
\begin{equation}\label{eq:hit-decomp-final}
  H_i(t)
  =
  b_i(t)
  +
  \E_i\left[
    H_1(t-T_i)\ind{T_i\le t}
  \right].
\end{equation}
Since \(H_1\) is locally bounded and converges to \(a\), it is globally bounded. Since \(T_i<\infty\) almost surely,
\[
  H_1(t-T_i)\ind{T_i\le t}
  \longrightarrow a
  \qquad\text{almost surely}.
\]
Dominated convergence therefore gives
\[
  \E_i\left[
    H_1(t-T_i)\ind{T_i\le t}
  \right]
  \longrightarrow a.
\]
Together with \eqref{eq:prehit-zero-final} and \eqref{eq:hit-decomp-final}, this proves
\[
  H_i(t)\longrightarrow a,
  \qquad i=1,\dots,K.
\]
\end{proof}

\section{Examples}\label{sec:examples}

We give explicit examples of non-arithmetic lifetime distributions for type \(1\) satisfying \eqref{eq:F1-heavy-tail}. For types \(2,\dots,K\), the non-arithmeticity assumption and condition \eqref{eq:finite-type-tails} are satisfied, for example, by bounded positive absolutely continuous, exponential, gamma, Weibull, and lognormal lifetimes, as well as by Pareto-type lifetimes with tail exponent greater than one.

\begin{example}\label{ex:AA-type-one-lifetime}
Let \(0<\gamma<1\). Let \(Z_\gamma\) be a positive \(\gamma\)-stable random variable with Laplace transform
\[
  \E\!\left(e^{-sZ_\gamma}\right)=e^{-s^\gamma},
  \qquad s\ge0,
\]
and let \(Y\) be an exponential random variable with rate \(\vartheta>0\), independent of \(Z_\gamma\). Define
\[
  T^{(1)}:=Z_\gamma Y^{1/\gamma}.
\]
Conditioning on \(Y\), we obtain
\[
  \E\!\left(e^{-sT^{(1)}}\right)
  =\E\!\left(e^{-s^\gamma Y}\right)
  =\frac{\vartheta}{\vartheta+s^\gamma}.
\]
Thus \(T^{(1)}\) has the Mittag--Leffler waiting-time distribution with parameters \((\gamma,\vartheta)\); see \cite[(1.3)--(1.6) and (2.41)--(2.42), with \(\delta=1\)]{CahoyPolito2013}. We use the notation
\[
  E_{a,b}(z):=\sum_{n=0}^{\infty}\frac{z^n}{\Gamma(an+b)},
  \qquad E_a:=E_{a,1}.
\]
The survival function and density of \(T^{(1)}\) are
\[
  \Pbb\{T^{(1)}>t\}=E_\gamma(-\vartheta t^\gamma),
  \qquad
  F_1'(t)=\vartheta t^{\gamma-1}E_{\gamma,\gamma}(-\vartheta t^\gamma),
  \qquad t>0.
\]
After the scaling \(t\mapsto\vartheta^{1/\gamma}t\), the asymptotic formulas in \cite[(4.5)--(4.6)]{MainardiGorenfloVivoli2005}, together with Euler's reflection formula, give
\[
  \Pbb\{T^{(1)}>t\}
  \sim\frac{1}{\vartheta\Gamma(1-\gamma)}t^{-\gamma},
  \qquad
  F_1'(t)
  \sim\frac{\gamma}{\vartheta\Gamma(1-\gamma)}t^{-1-\gamma},
  \qquad t\to\infty.
\]
Therefore \(F_1\) is absolutely continuous and hence non-arithmetic, and \eqref{eq:F1-heavy-tail} holds with
\[
  c_1=\frac{1}{\vartheta\Gamma(1-\gamma)}.
\]
\end{example}

\begin{example}\label{ex:pareto-lomax}
Let \(0<\gamma<1\). The Pareto distribution with scale \(x_0>0\) and shape \(\gamma\) has survival function
\[
  \overline F_1(t)=
  \begin{cases}
    1, & 0\le t<x_0,\\
    \left(\dfrac{x_0}{t}\right)^\gamma, & t\ge x_0,
  \end{cases}
\]
and density
\[
  F_1'(t)=\gamma x_0^\gamma t^{-1-\gamma}\ind{t\ge x_0},
  \qquad t>0.
\]
Therefore \(F_1\) is absolutely continuous and hence non-arithmetic, and \eqref{eq:F1-heavy-tail} holds with \(c_1=x_0^\gamma\).

The Lomax distribution with scale \(\vartheta>0\) and shape \(\gamma\) has survival function and density
\[
  \overline F_1(t)=\left(1+\frac{t}{\vartheta}\right)^{-\gamma},
  \qquad
  F_1'(t)=\frac{\gamma}{\vartheta}
  \left(1+\frac{t}{\vartheta}\right)^{-1-\gamma},
  \qquad t>0.
\]
Since
\[
  \overline F_1(t)\sim\vartheta^\gamma t^{-\gamma},
  \qquad t\to\infty,
\]
\(F_1\) is non-arithmetic and \eqref{eq:F1-heavy-tail} holds with \(c_1=\vartheta^\gamma\).
\end{example}

\begin{example}\label{ex:burr-loglogistic}
Let \(a,b,\vartheta>0\), and consider the Burr XII distribution with survival function
\[
  \overline F_1(t)=
  \left(1+\left(\frac{t}{\vartheta}\right)^a\right)^{-b},
  \qquad t\ge0.
\]
Its density is
\[
  F_1'(t)=\frac{ab}{\vartheta}
  \left(\frac{t}{\vartheta}\right)^{a-1}
  \left(1+\left(\frac{t}{\vartheta}\right)^a\right)^{-b-1},
  \qquad t>0.
\]
If \(ab=\gamma\in(0,1)\), then
\[
  \overline F_1(t)\sim\vartheta^\gamma t^{-\gamma},
  \qquad
  F_1'(t)\sim\gamma\vartheta^\gamma t^{-1-\gamma},
  \qquad t\to\infty.
\]
Therefore \(F_1\) is absolutely continuous and hence non-arithmetic, and \eqref{eq:F1-heavy-tail} holds with \(c_1=\vartheta^\gamma\). The log-logistic distribution corresponds to \(b=1\); hence choosing \(a=\gamma\in(0,1)\) gives another instance of \eqref{eq:F1-heavy-tail}.
\end{example}

\section*{Acknowledgements} The authors are grateful to José Alfredo López-Mimbela and Ekaterina Todorova Kolkovska, both from the Centro de Investigación en Matemáticas (CIMAT), for helpful comments and suggestions. F.P.M. and J.H.R.G. are supported by FAPESP (grants 2023/13453-5 and 2025/03804-0, respectively).

\end{document}